\documentclass[oneside,11pt,a4paper]{amsart}

\usepackage{amsmath,yhmath}
\usepackage{amssymb}
\usepackage{amsfonts}
\usepackage{bbm} 
\usepackage{mathtools}
\usepackage{tikz}
\usepackage{tikz-cd}
\usepackage[inline]{enumitem}
\usepackage[mathscr]{eucal} 
\allowdisplaybreaks
\usepackage{stmaryrd} 

\numberwithin{equation}{section}
\usepackage{caption}
\usepackage{geometry}
\linespread{1.00}
\usepackage{tikz}
\tikzset{Equal/.style={-,double line with arrow={-,-}}} 
\usetikzlibrary{decorations.markings}
\tikzset{double line with arrow/.style args={#1,#2}{decorate,decoration={markings,
mark=at position 0 with {\coordinate (ta-base-1) at (0,1pt);
\coordinate (ta-base-2) at (0,-1pt);},
mark=at position 1 with {\draw[#1] (ta-base-1) -- (0,1pt);
\draw[#2] (ta-base-2) -- (0,-1pt);
}}}}

\usepackage{aliascnt}
\usepackage[colorlinks, linkcolor=black,citecolor=blue, urlcolor=cyan, unicode, bookmarksnumbered]{hyperref}

\newcommand*{\eeqref}[2][Equation~]{%
  \hyperref[{#2}]{#1(\ref*{#2})}%
}

\newtheoremstyle{note}
{\topsep}
{\topsep}
{}
{0pt}
{\bfseries}
{.}
{0.5em }
{}
\theoremstyle{plain}

\newtheorem{Thm}{Theorem}[section]

\newtheorem*{Thm*}{Theorem}

\newaliascnt{prop}{Thm}
\newtheorem{Prop}[prop]{Proposition}
\aliascntresetthe{prop}

\newaliascnt{lemma}{Thm}
\newtheorem{Lemma}[lemma]{Lemma}
\aliascntresetthe{lemma}

\newaliascnt{coro}{Thm}
\newtheorem{Coro}[coro]{Corollary}
\aliascntresetthe{coro}

\newaliascnt{conjecture}{Thm}

\aliascntresetthe{conjecture}

\newtheorem{Lemma*}{Lemma}
\newtheorem*{Prop*}{Proposition}
\newtheorem*{Coro*}{Corollary}

\theoremstyle{definition}

\newaliascnt{def}{Thm}
\newtheorem{Def}[def]{Definition}
\aliascntresetthe{def}

\newtheorem*{Def*}{Definition}

\newaliascnt{Eg}{Thm}

\aliascntresetthe{Eg}

\newtheorem*{eg*}{Example}

\theoremstyle{remark}

\newaliascnt{rmk}{Thm}
\newtheorem{RMK}[rmk]{Remark}
\aliascntresetthe{rmk}

\newtheorem*{RMK*}{Remark}

\theoremstyle{plain}

\newtheorem*{iThm*}{Theorem}

\newaliascnt{iprop}{iThm}

\aliascntresetthe{iprop}

\newaliascnt{iCoro}{iThm}

\aliascntresetthe{iCoro}

\newaliascnt{iconjecture}{iThm}

\aliascntresetthe{iconjecture}

\theoremstyle{definition}

\newaliascnt{idef}{iThm}

\aliascntresetthe{idef}

\newtheorem*{iDef*}{Definition}

\newaliascnt{iEg}{iThm}

\aliascntresetthe{iEg}

\newtheorem*{ieg*}{Example}

\theoremstyle{remark}

\newaliascnt{irmk}{iThm}

\aliascntresetthe{irmk}

\newtheorem*{iRMK*}{Remark}


\newcommand{\bZ} { {\mathbb{Z}}}
\newcommand{\bR} { {\mathbb{R}}}
\newcommand{\bK} { {\mathbb{K}}}
\newcommand{\bS} { {\mathbb{S}}}
\newcommand{\bT} { {\mathbb{T}}}

\newcommand{\bfk} { {\mathbf{k}}}
\newcommand{\cA} { {\mathcal{A}}}
\newcommand{\cB} { {\mathcal{B}}}
\newcommand{\cC} { {\mathcal{C}}}

\newcommand{\cE} { {{\mathcal{E}}}}

\newcommand{\sT}{ \mathscr{T} }
\newcommand{\supp}{{\textnormal{supp}}}
\renewcommand{\SS}{{ \operatorname{SS}  }}

\newcommand{\HOM}{{ \textnormal{Hom}  }}
\newcommand{\HHOM}{{ \mathcal{H}om  }}

\newcommand{\Mod}{{\textnormal{Mod}}}

\newcommand{\QCoha}{{\operatorname{aQCoh}}}

\newcommand{\Sh}{{\operatorname{Sh}}}
\newcommand{\Fun}{{\operatorname{Fun}}}

\newcommand{\Sp}{{\operatorname{Sp}}}
\newcommand{\Catdual}{{\operatorname{Cat^{dual}_{st}}}}
\newcommand{\Motloc}{{\operatorname{Mot_{loc}}}}
\newcommand{\Uloc}{{\mathcal{U}^{\textnormal{cont}}_{\textnormal{loc}}}}
\newcommand{\FST}{{\operatorname{FST}}}
\newcommand{\Nov}{{\operatorname{Nov}}}

\title{A remark on Continuous K-theory and Fourier-Sato transform}
\author{Bingyu Zhang}
\date{}

\begin{document}

\begin{abstract}
In this note, we prove a generalization of Efimov's computation for the universal localizing invariant of categories of sheaves with certain microsupport constraints. The proof is based on certain categorical equivalences given by the Fourier-Sato transform, which is different from the original proof. As an application, we compute the universal localizing invariant of the category of almost quasi-coherent sheaves on the Novikov toric scheme introduced by Vaintrob.
\end{abstract}

\maketitle

\section{Introduction}
In \cite{Efimov-K-theory}, Efimov introduces an algebraic K-theory for a class of large categories, namely dualizable stable categories, which extends the usual non-connective algebraic K-theory defined for certain small categories. In general, the construction enables us to extend localizing invariants from small categories to dualizable stable categories. In particular, for the universal (finitary) localizing invariant $\mathcal{U}_{loc}$, there exists a canonical extension $\Uloc: \Catdual \rightarrow \Motloc$, where $\Catdual$ is the category of dualizable stable categories (and strongly continuous functors between them) and $\Motloc$ is the category of non-commutative motives introduced by \cite{universal-hihger-K}. 

However, as the construction involves computations about the so-called Calkin category that is not easy to describe, computation of the continuous version of localizing invariants is even harder, and few computational results are known. One distinguished result among them is the following:
\begin{Thm}[{{\cite[Theorem 6.11]{Efimov-K-theory}}}]\label{Theorem: Efimov no support} Let $X$ be a locally compact Hausdorff space and $\underline{\mathcal{C}}$ be a presheaf on $X$ with values in $\Catdual$. Then the category $\Sh(X;\underline{\mathcal{C}})$ is dualizable stable and we have the following natural isomorphism in $\Motloc$:
\[\Uloc (\Sh(X;\underline{\mathcal{C}})) \simeq \Gamma_c(X, (\Uloc\underline{\mathcal{C}})^\sharp).\]   
\end{Thm}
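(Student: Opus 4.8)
The plan is to exhibit both sides of the asserted equivalence as cosheaves of noncommutative motives on $X$ and to compare them by a pointwise (costalk) argument. For an open $U\subseteq X$ set $F(U):=\Uloc\big(\Sh(U;\underline{\mathcal{C}}|_U)\big)$ and $G(U):=\Gamma_c\big(U,(\Uloc\underline{\mathcal{C}})^\sharp|_U\big)$; both are functors on the poset of opens, $F$ being covariant through the strongly continuous extension-by-zero functors $j_!\colon \Sh(U;\underline{\mathcal{C}}|_U)\to\Sh(X;\underline{\mathcal{C}})$ and $G$ through the usual functoriality of compactly supported sections. I would first record that each $\Sh(U;\underline{\mathcal{C}}|_U)$ is dualizable stable: for $\underline{\mathcal{C}}$ the constant presheaf $\Sp$ this is the fact that $\Sh(U;\Sp)$ is compactly assembled, since $U$ (locally compact Hausdorff) has a basis of relatively compact opens; the general case follows as $\Sh(U;\underline{\mathcal{C}}|_U)$ is built from such categories by the limit/colimit operations under which $\Catdual$ is closed. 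Using the sheafification unit $\Uloc\underline{\mathcal{C}}\to(\Uloc\underline{\mathcal{C}})^\sharp$ and the canonical assembly map one then constructs a natural transformation $\eta\colon F\Rightarrow G$ compatible with the $j_!$'s, and it remains to show $\eta$ is an equivalence at $U=X$.

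The core step is that $F$ and $G$ are cosheaves on $X$, i.e. send a finite open cover to a Mayer--Vietoris pushout and an increasing union to a filtered colimit. For $G$ this is the classical behaviour of $\Gamma_c$. For $F$, given opens $U_1,U_2$ with $U=U_1\cup U_2$ and $W=U_1\cap U_2$, I would combine the Verdier localization sequences $\Sh(W)\to\Sh(U_1)\to\Sh(U_1\setminus U_2)$ and $\Sh(U_2)\to\Sh(U)\to\Sh(U\setminus U_2)$ with the excision identity $U_1\setminus U_2=U\setminus U_2$; since $\Uloc$ is a localizing invariant it carries these to cofiber sequences in $\Motloc$, and identifying the two cofibers shows that the square on $W,U_1,U_2,U$ is bicartesian. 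For an increasing union $U=\bigcup_n U_n$ one has $\Sh(U;\underline{\mathcal{C}}|_U)\simeq\operatorname{colim}_n\Sh(U_n;\underline{\mathcal{C}}|_{U_n})$ in $\Catdual$ along the $j_!$'s, and $\Uloc$ commutes with this filtered colimit because it is finitary. Hence $F$ is a cosheaf of motives and $\eta$ is a morphism of cosheaves.

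A morphism of cosheaves on a locally compact Hausdorff space is an equivalence once it is an equivalence on costalks at every $x\in X$, so it suffices to treat those. For small $U\ni x$ the cofiber $\operatorname{cofib}\big(F(U\setminus\{x\})\to F(U)\big)$ is $\Uloc$ of the category of sheaves on $U$ supported at $\{x\}$, which is $\Uloc(\underline{\mathcal{C}}_x)$ with $\underline{\mathcal{C}}_x:=\operatorname{colim}_{x\in V}\underline{\mathcal{C}}(V)$; as $\Uloc$ is finitary this equals $\operatorname{colim}_{x\in V}\Uloc(\underline{\mathcal{C}}(V))=(\Uloc\underline{\mathcal{C}})^\sharp_x$, which is also the costalk of $G$, and $\eta$ induces the identity there. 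Thus $\eta_X\colon\Uloc(\Sh(X;\underline{\mathcal{C}}))\xrightarrow{\ \sim\ }\Gamma_c(X,(\Uloc\underline{\mathcal{C}})^\sharp)$, which together with the dualizability noted above yields the theorem.

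The main obstacle I anticipate is the cosheaf property of $F$ for \emph{infinite} covers: it hinges on $\Uloc$ genuinely commuting with filtered colimits of dualizable categories along strongly continuous functors, and on the identification $\Sh(\bigcup_n U_n;\underline{\mathcal{C}})\simeq\operatorname{colim}_n\Sh(U_n;\underline{\mathcal{C}})$ in $\Catdual$---features special to the continuous theory that ordinary localizing invariants lack and that must be invoked carefully. A secondary subtlety is making the costalk formalism, and the implication ``equivalence on all costalks $\Rightarrow$ equivalence of cosheaves'', precise on a possibly non-second-countable $X$; this can be handled by first reducing to $\sigma$-compact opens using the colimit compatibility just established.
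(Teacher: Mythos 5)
Note first that the paper does not prove this theorem: it is quoted from Efimov and used as a black box, so there is no internal proof to compare against. Taking the proposal on its own terms, the cosheaf-and-costalk skeleton is plausible, but the step you describe as a ``secondary subtlety'' is actually the central gap. The implication ``equivalence on all costalks implies equivalence of $\Motloc$-valued cosheaves'' requires hypercompleteness of the relevant sheaf and cosheaf categories, which holds automatically on spaces of finite covering dimension but fails on general locally compact Hausdorff spaces. Where hypercompleteness fails, $(\Uloc\underline{\mathcal{C}})^\sharp$ and its hypercompletion have identical stalks, so $\Gamma_c$ applied to either produces cosheaves with identical costalks that nevertheless need not coincide; a pointwise comparison cannot detect this, and reducing to $\sigma$-compact opens controls cardinality rather than covering dimension, hence does not help. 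A correct argument must either establish hypercompleteness in the relevant situation or avoid pointwise detection entirely, e.g. by descending along the poset of compact subsets.

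Second, your cosheaf descent for $F$ on infinite covers uses $\Sh(\bigcup_n U_n;\underline{\mathcal{C}})\simeq\operatorname{colim}_n\Sh(U_n;\underline{\mathcal{C}})$ in $\Catdual$ along $j_!$, and more generally that $U\mapsto\Sh(U;\underline{\mathcal{C}}|_U)$ is a $\Catdual$-valued cosheaf with strongly continuous transitions. Together with the dualizability of each $\Sh(U;\underline{\mathcal{C}}|_U)$, these are themselves among Efimov's principal technical theorems; they rest on the compactly assembled structure of $\Sh(U;\Sp)$ and on the interaction of colimits in $\Catdual$ with fully faithful strongly continuous functors. The finite-cover Mayer--Vietoris sequences produced from Verdier localization are the easy part of the argument, while the infinite-cover descent and the local-to-global reconstruction on an arbitrary locally compact Hausdorff space are precisely where the heavy lifting occurs, and your outline leaves both open rather than proving them.
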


Another interesting example concerns categories of sheaves with microsupport constraints. For a manifold $M$ and any $F\in \Sh(M)$, Kashiwara and Schapira introduced a conic closed set $\SS(F)\subset T^*M$ in \cite{KS90}, which is called the microsupport of sheaves. For a conic closed set $Z\subset T^*M$, we denote by $\Sh_{Z}(M;{\mathcal{C}})$ the full subcategory of sheaves whose microsupport is bounded by $Z$. Then $\Sh_{Z}(M;{\mathcal{C}})$ is dualizable stable (when $\cC$ is) since it is a reflexive subcategory of $\Sh(M;{\mathcal{C}})$.

\begin{Thm}[{\cite[Proposition 4.21]{Efimov-K-theory}}]\label{Theorem: Efimov with support}Let ${\mathcal{C}}$ be a dualizable stable category. Then for the category $\Sh_{\bR\times [0,\infty)}(\bR;{\mathcal{C}})$, which is known to be dualizable stable, we have the following natural equivalence in $\Motloc$:
\[\Uloc (\Sh_{\bR\times [0,\infty)}(\bR;{\mathcal{C}})) \simeq 0.\] 
\end{Thm}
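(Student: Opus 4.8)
The plan is to use the translation flow on $\bR$ to run an Eilenberg--swindle-type argument inside Efimov's framework. Write $\mathcal{A}:=\Sh_{\bR\times[0,\infty)}(\bR;\mathcal{C})$; it is dualizable by the discussion above, and it is generated under colimits by the objects $\bfk_{[a,\infty)}\otimes c$ and $\bfk_{(a,\infty)}\otimes c$. For $s\in\bR$ let $T_s\colon\mathcal{A}\to\mathcal{A}$ be pushforward along $t\mapsto t+s$; a translation fixes cotangent directions, so $T_s$ restricts to a strongly continuous auto-equivalence of $\mathcal{A}$, and $(s,F)\mapsto T_sF$ is a continuous action of the topological group $(\bR,+)$, i.e.\ a functor $B(\bR,+)\to\Catdual$. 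For $s\ge 0$ the restriction maps $\bfk_{[a,\infty)}\to\bfk_{[a+s,\infty)}$ assemble (after passing to colimits) into a natural transformation $\eta_s\colon\id_{\mathcal{A}}\Rightarrow T_s$, compatible under composition in $s$. Set $K_m:=\fib{\id_{\mathcal{A}}\xrightarrow{\eta_m}T_m}$; it is convolution with $\bfk_{[0,m)}$, a ``finite-window'' operator.

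I would then record two facts. First, $\varinjlim_{s\to\infty}T_s\simeq 0$ in $\Fun^{L}(\mathcal{A},\mathcal{A})$: this functor preserves colimits, so it suffices to see it annihilates the generators, which it does since $\varinjlim_{s}\bfk_{[a+s,\infty)}=0$ (stalkwise eventually zero); consequently, since filtered colimits are exact, $\id_{\mathcal{A}}\simeq\varinjlim_m\fib{\id_{\mathcal{A}}\xrightarrow{\eta_m}T_m}=\varinjlim_m K_m$. Second, since $\bR$ is contractible we have $B(\bR,+)\simeq\pt$, so the translation action is trivializable and $T_s\simeq\id_{\mathcal{A}}$ in $\Catdual$ for every $s$; hence $\Uloc(T_s)\simeq\mathrm{id}_{\Uloc(\mathcal{A})}$, and additivity of the localizing invariant $\Uloc$, applied to the natural fibre sequence of endofunctors $K_m\to\id_{\mathcal{A}}\xrightarrow{\eta_m}T_m$, gives $\Uloc(K_m)=\Uloc(\id_{\mathcal{A}})-\Uloc(T_m)=0$.

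It remains to pass from ``$\id_{\mathcal{A}}$ is a filtered colimit of endofunctors with vanishing $\Uloc$'' to ``$\Uloc(\mathcal{A})=0$'', and I expect this to be the main obstacle. The naive swindle --- $S:=\bigoplus_n T_n$, with $S\simeq\id_{\mathcal{A}}\oplus T_1S\simeq\id_{\mathcal{A}}\oplus S$ --- does not work, because a countably infinite coproduct of endofunctors is not strongly continuous (its right adjoint is an infinite product, which fails to preserve colimits), so $S$ is not a morphism of $\Catdual$; and $\Uloc$ does \emph{not} commute with arbitrary filtered colimits of endofunctors. The correct move is to run the swindle one level down, inside the completion/Calkin category entering Efimov's construction of $\Uloc$ on $\Catdual$, where the relevant infinite sums become genuine (completed) operations; equivalently, one uses that $\Uloc(\mathcal{A})=\Uloc(\id_{\mathcal{A}})$ is computed as the filtered colimit of $\Uloc$ over trace-class approximations to $\id_{\mathcal{A}}$, and one must verify that the $K_m$ --- or a trace-class refinement of them (restrict to a compact window before smearing) --- are cofinal among these. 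This is where the $\gamma$-sheaf structure of $\mathcal{A}$ must be used in an essential way: the finite-window operators are genuinely ``small'' precisely because of the microsupport constraint.

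An alternative, presumably closer to the paper's viewpoint, is to first apply a Fourier--Sato-type equivalence to replace $\mathcal{A}$ by a category of (possibly twisted) sheaves on a locally compact Hausdorff space and then invoke \autoref{Theorem: Efimov no support}, arranging the coefficient system so that the resulting $\Gamma_c$ visibly vanishes; the subtlety there is that the obvious model --- the real line with the $\gamma$-topology --- is not Hausdorff, and producing an honestly admissible model is exactly what the Fourier--Sato transform is for.
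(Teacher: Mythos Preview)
Your swindle is set up cleanly through the identity $\Uloc(K_m)=0$, but the proof is not complete: you yourself flag that passing from $\id_{\mathcal{A}}\simeq\varinjlim_m K_m$ with each $\Uloc(K_m)=0$ to $\Uloc(\mathcal{A})=0$ is the crux, and you do not actually carry it out. Your route (a)---showing the $K_m$ are cofinal among trace-class approximations to $\id_{\mathcal{A}}$---is in the spirit of Efimov's own argument in \cite{Efimov-K-theory} (which the paper summarizes as an $\bR$-indexed semi-orthogonal decomposition after passing to the $\gamma^\vee$-topology), but it requires a genuine compactness input about convolution with $\bfk_{[0,m)}$ in the microsupport-constrained category, and you have not supplied it. As written, the argument stops at an outline.

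The paper's proof takes neither your route (a) nor your route (b). It instead observes that
\[
\begin{tikzcd}
\Sh(\bR;\Sp) \arrow[r]\arrow[d] & \Sh_{\bR\times[0,\infty)}(\bR;\Sp) \arrow[d]\\
\Sh_{\bR\times(-\infty,0]}(\bR;\Sp) \arrow[r] & \Sh_{\bR\times\{0\}}(\bR;\Sp)
\end{tikzcd}
\]
is Cartesian in $\Catdual$, with the two off-diagonal corners interchanged by $x\mapsto -x$. Applying \cite[Proposition~4.11]{Efimov-K-theory} gives a fiber sequence
\[
\Uloc\bigl(\Sh_{\bR\times[0,\infty)}(\bR;\Sp)\bigr)^{\oplus 2}\longrightarrow \Uloc\bigl(\Sh_{\bR\times\{0\}}(\bR;\Sp)\bigr)\longrightarrow \Sigma\,\Uloc\bigl(\Sh(\bR;\Sp)\bigr),
\]
and one checks the second map is the loop--suspension equivalence for $\Uloc(\Sp)$ (using $\Sh_{\bR\times\{0\}}(\bR;\Sp)\simeq\Sp$ and \autoref{Theorem: Efimov no support} for $\Sh(\bR;\Sp)$), forcing the first term to vanish. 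So the paper deduces \autoref{Theorem: Efimov with support} \emph{from} \autoref{Theorem: Efimov no support}, with no swindle and no Calkin-level analysis; the trade-off is that your approach, if completed, would be self-contained and would not need \autoref{Theorem: Efimov no support} as input.
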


It was explained by Alexander I. Efimov, during the \textit{Masterclass: Continuous K-theory} in University of Copenhagen on June 2024, that \autoref{Theorem: Efimov with support} is still true for a finite dimensional real vector space $V$ with the microsupport constraint $Z=V\times \gamma$ for a non-zero proper closed convex cone $\gamma$ (i.e. Equation \eqref{equation: HD cone} below). We are grateful for his generosity. One can prove the high dimensional version in the same way as the 1-dimensional version using a $V$-indexed semi-orthogonal decomposition.

\subsection*{New results}

In this article, using the Fourier-Sato transform, we directly identify certain categories of sheaves with microsupport constraints with certain categories without microsupport constraints. Those facts are well known to experts; however, the interesting part is that we can use them to deduce a generalization of \autoref{Theorem: Efimov with support} directly from \autoref{Theorem: Efimov no support}.

To achieve our target, we explain a definition of microsupport for more general coefficient categories as described in \cite[Remark 4.23]{Efimov-K-theory}. In particular, we show that if $\cC$ is presentable stable, then the definition inherits most of the nice properties deduced in \cite{KS90}. In particular, we may develop a $\Motloc$-valued microlocal sheaf theory without any difficulty. Those constructions may be of independent interest.

Our main result is
\begin{Thm}[{\autoref{Theorem: main thm} (2-b)} below]\label{Theorem: conic main thm}Let ${\mathcal{C}}$ be a dualizable stable category. For a finite dimensional real vector space $V$ and a conic closed set $X\subset V^\vee$, we have the equivalence 
\[\Uloc (\Sh_{V\times X}(V;\cC)) \simeq \Gamma_c(X; \Uloc({\cC}) ).\]
\end{Thm}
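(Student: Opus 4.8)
The plan is to deduce the statement from Efimov's no-support computation, \autoref{Theorem: Efimov no support}, by transporting the microsupport constraint away via the Fourier--Sato transform. The crucial input, which is exactly what \autoref{Theorem: main thm} supplies, is an equivalence of dualizable stable categories
\[
\Sh_{V\times X}(V;\cC)\;\simeq\;\Sh(X;\cC),
\]
where on the left $X\subset V^\vee$ cuts out a microsupport constraint, while on the right $X$ is regarded merely as a locally compact Hausdorff space (subspace topology from $V^\vee$) and $\cC$ is the corresponding constant presheaf valued in $\Catdual$. Granting this equivalence, the theorem is a short manipulation of Efimov's result.

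For the equivalence I would proceed through (a large, $\cC$-linear version of) the Fourier--Sato transform. The guiding principle is that Fourier--Sato interchanges the microsupport of a sheaf on $V$ with the base support of its transform on $V^\vee$: the constraint $\SS(F)\subseteq V\times X$ corresponds to $\supp(G)\subseteq X$ for the transformed sheaf $G$ on $V^\vee$, and the full subcategory of $\Sh(V^\vee;\cC)$ spanned by sheaves supported on the closed set $X$ is identified with $\Sh(X;\cC)$ by pushforward along the closed embedding $X\hookrightarrow V^\vee$. Two points have to be checked: that the comparison functors are strongly continuous, so that the equivalence lives in $\Catdual$ and can be evaluated by a localizing invariant; and that it holds for an \emph{arbitrary} closed conic $X$, not just a proper convex cone --- this is where the microsupport formalism with general coefficient categories, developed earlier, is used, and it is the genuinely new ingredient; the remaining facts are well known to experts. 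Dualizability is not an obstacle: the left-hand side is a reflective subcategory of the dualizable category $\Sh(V;\cC)$ (cf.\ the introduction), and the right-hand side is dualizable stable by \autoref{Theorem: Efimov no support} itself.

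Granting the equivalence, the computation is immediate. Viewing $\cC$ as a constant $\Catdual$-valued presheaf on $X$, the presheaf $\Uloc\cC$ is constant with value $\Uloc(\cC)\in\Motloc$, and its sheafification $(\Uloc\cC)^\sharp$ is by definition the constant sheaf with value $\Uloc(\cC)$, so $\Gamma_c(X,(\Uloc\cC)^\sharp)$ is exactly $\Gamma_c(X;\Uloc(\cC))$. Therefore \autoref{Theorem: Efimov no support}, together with the Fourier--Sato equivalence, yields
\[
\Uloc\bigl(\Sh_{V\times X}(V;\cC)\bigr)\;\simeq\;\Uloc\bigl(\Sh(X;\cC)\bigr)\;\simeq\;\Gamma_c\bigl(X,(\Uloc\cC)^\sharp\bigr)\;=\;\Gamma_c\bigl(X;\Uloc(\cC)\bigr),
\]
which is the assertion.

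The main obstacle is thus concentrated entirely in the first step: realizing the support/microsupport exchange at the level of large sheaf categories with coefficients in an arbitrary dualizable $\cC$, with enough precision to guarantee strong continuity of the comparison functors and to handle an arbitrary conic closed $X$ uniformly. Once this is in place everything else is formal, and the known vanishing statements --- \autoref{Theorem: Efimov with support} and its extension to nonzero proper closed convex cones --- reappear as precisely the cases in which $X$ has trivial compactly supported cohomology.
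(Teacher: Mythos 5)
There is a genuine gap in the key step. You assert a direct categorical equivalence
\[
\Sh_{V\times X}(V;\cC)\;\simeq\;\Sh(X;\cC),
\]
claim this is ``exactly what \autoref{Theorem: main thm} supplies,'' and propose to obtain it from the Fourier--Sato transform. Neither part is correct. \autoref{Theorem: main thm} asserts an equivalence of the form $\Uloc(\Sh_{V\times X}(V;\cC)) \simeq \Gamma_c(X;\Uloc(\cC))$ in $\Motloc$ --- a post-$\Uloc$ statement, not an equivalence of the underlying categories. More importantly, the classical Fourier--Sato transform exchanges \emph{conic} sheaves on $V$ with conic sheaves on $V^\vee$; the category $\Sh_{V\times X}(V;\cC)$ consists of arbitrary (not conic) sheaves with a microsupport bound, so the classical transform simply does not act on it. This is precisely the issue the Tamarkin variant is designed to circumvent, and it is why the paper works with $\sT_{V\times X}(T^*V;\cC)$ instead of $\Sh_{V\times X}(V;\cC)$.

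What the paper actually establishes at the level of categories is the Tamarkin-twisted statement $\sT_{V\times X}(T^*V;\Sp)\simeq\Sh(X;\sT_\Sp)$ (\autoref{prop: kunneth2}), which after \autoref{prop: kunneth} reads
\[
\Sh_{V\times X}(V;\Sp)\otimes\sT_\Sp\;\simeq\;\Sh(X;\Sp)\otimes\sT_\Sp.
\]
Since $\sT_\Sp$ is not invertible in $\Catdual$, one cannot cancel it to get your untwisted equivalence. The paper instead removes the twist only after applying $\Uloc$: by \autoref{prop: 1.2 from 1.1} and \autoref{lemma: trivial lemma}, tensoring with $\sT_\Sp$ has the effect of a loop $\Omega$ on any localizing invariant, so one writes $\Uloc(\Sh_{V\times X}(V;\Sp))\simeq\Sigma\,\Uloc(\Sh_{V\times X}(V;\Sp)\otimes\sT_\Sp)\simeq\Sigma\Omega\,\Gamma_c(X;\Uloc(\Sp))\simeq\Gamma_c(X;\Uloc(\Sp))$. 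This suspension/loop cancellation is the step your proposal omits, and without it the argument does not close. Your outline of how to finish from the (hypothetical) equivalence --- applying \autoref{Theorem: Efimov no support} with the constant presheaf $\cC$ --- is fine, but the equivalence you feed into it is not available.
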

In particular, if we pick $X=\gamma$, a non-zero proper convex closed cone. Then we deduce from a direct cohomology computation that
\begin{equation}\label{equation: HD cone}
    \Uloc (\Sh_{V\times \gamma}(V;{\cC})) \simeq \Gamma_c(\,\gamma\,; \Uloc({\cC}) )=0,
\end{equation}
which is the straightforward generalization of \autoref{Theorem: Efimov with support}.

\begin{RMK}The proof of \autoref{Theorem: Efimov with support} therein is better in the sense that it could be generalized to all accessible localizing invariants, in particular, this also works for Equation \eqref{equation: HD cone}. However, so far we only know \autoref{Theorem: conic main thm} works for finitary localizing invariants due to the state of \autoref{Theorem: Efimov no support}.    
\end{RMK}

\begin{RMK} Here, we explain the logic dependence of results. 

The original proof of \autoref{Theorem: Efimov with support} (and Equation \eqref{equation: HD cone}) uses the microlocal cut-off lemma of Kashiwara and Schapira to identify the corresponding categories to sheaves over the so-called $\gamma^\vee$-topology (which is non-Hausdorff!). Then it is concluded by a semi-orthogonal decomposition of corresponding presheaf categories. No other machinery in microlocal sheaf theory is involved.

Our proof utilizes more machinery from microlocal sheaf theory; unsurprisingly, the microlocal cut-off lemma appears implicitly in our approach. However, we will not construct any semi-orthogonal decomposition, which makes our proof different from the original one.    
\end{RMK}

We also prove a version of the theorem for the Tamarkin category $\sT_{V\times X}(T^*V)\subset \Sh(V;\sT_\cC)$ that will be defined later. 
\begin{Thm}[{\autoref{Theorem: main thm} (1)} below]\label{Theorem: main thm intro}Let ${\mathcal{C}}$ be a dualizable stable category. For a finite dimensional real vector space $V$ and a closed set $X\subset V^\vee$, we have the equivalence 
\[\Uloc (\sT_{V\times X}(T^*V)) \simeq \Omega\Gamma_c(X; \Uloc({\cC}) ).\]
\end{Thm}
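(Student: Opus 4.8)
The plan is to deduce \autoref{Theorem: main thm intro} from \autoref{Theorem: conic main thm} (equivalently \autoref{Theorem: main thm} (2-b)) by identifying the Tamarkin category $\sT_{V\times X}(T^*V)$ with a category of sheaves with microsupport constraints on a vector space of one higher dimension, and then tracking how the $\Uloc$-values transform. Recall that the Tamarkin category is defined, following the standard construction, as the left orthogonal (or a Bousfield localization) of $\Sh_{\{\tau\le 0\}}(V\times\bR;\cC)$ inside $\Sh(V\times\bR;\cC)$, where $\tau$ is the cotangent coordinate dual to the extra variable $t\in\bR$; equivalently it is $\Sh_{\{\tau>0\}}(V\times\bR;\cC)$ under the convolution-unit normalization. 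Concretely, $\sT_{V\times X}(T^*V)$ should be realized as $\Sh_{(V\times\bR)\times Z'}(V\times\bR;\cC)$ for an appropriate conic closed set $Z'\subset (V\times\bR)^\vee = V^\vee\times\bR$, namely $Z' = (X\times\{0\})\cup(\{0\}\times\bR_{>0})$ or its closure, carved out so that the microsupport lies over $X$ in the $V^\vee$-direction and over the positive half-line in the $\tau$-direction.

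First I would recall the precise definition of $\sT_\cC$ and of $\sT_{V\times X}(T^*V)$ as a reflexive (hence dualizable stable, when $\cC$ is) subcategory of $\Sh(V;\sT_\cC)\subset\Sh(V\times\bR;\cC)$, and verify that it coincides with $\Sh_{V\times\bR\times Z'}(V\times\bR;\cC)$ for the cone $Z'$ above; this is the "well-known to experts" input, and I would cite or reprove the relevant microlocal cut-off / orthogonal-decomposition facts for general presentable stable coefficients, which the excerpt promises are developed in the body. Second, I would apply \autoref{Theorem: conic main thm} with the vector space $V\times\bR$ in place of $V$ and the conic closed set $Z'\subset (V\times\bR)^\vee$ in place of $X$, giving
\[
\Uloc(\Sh_{(V\times\bR)\times Z'}(V\times\bR;\cC)) \simeq \Gamma_c(Z';\Uloc(\cC)).
\]
Third, I would compute $\Gamma_c(Z';\Uloc(\cC))$. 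Here $Z'$ is the union of $X\times\{0\}$ with the ray $\{0\}\times\bR_{>0}$, glued along the origin if the origin is adjoined; the compactly supported sections of the constant $\Motloc$-object along this configuration should, via the cofiber sequence relating $Z'$, $X\times\{0\}$ and the open ray, produce a suspension: $\Gamma_c(Z';\Uloc\cC)\simeq \Omega\,\Gamma_c(X;\Uloc\cC)$, because $\Gamma_c$ of an open half-line $\bR_{>0}$ with constant coefficients is a shift of the point and coning it off against $X$ contributes exactly the loop. I would make this precise by an excision / Mayer–Vietoris argument in $\Motloc$, using that $\Gamma_c(\bR;-)\simeq\Omega(-)$ on constant coefficients and that $\Gamma_c$ sends the relevant pushout of closed sets to a pullback of spectra-objects.

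The main obstacle I expect is the bookkeeping in the second and third steps: getting the cone $Z'$ exactly right (including the delicate behavior at the origin, i.e. whether $Z'$ is $(X\times 0)\cup(\{0\}\times\bR_{\ge 0})$ or its closure, and whether $X$ itself is required to avoid $0$), and then computing $\Gamma_c(Z';\Uloc\cC)$ so that the suspension comes out as a genuine $\Omega$ rather than an off-by-a-shift. A secondary subtlety is justifying the identification of $\sT_{V\times X}(T^*V)$ with $\Sh_{(V\times\bR)\times Z'}(V\times\bR;\cC)$ at the level needed — not merely an equivalence of categories but one compatible enough that \autoref{Theorem: conic main thm} applies verbatim — which requires that the microsupport formalism for $\sT_\cC$-coefficients, the add-a-variable construction, and the reflexive-subcategory structure all interact as in the classical $\cC=\Sp$ case; the excerpt's promised development of $\Motloc$-valued microlocal sheaf theory should supply exactly what is needed, so the real work is to assemble these pieces carefully rather than to prove anything genuinely new.
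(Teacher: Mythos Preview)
Your plan has a genuine gap: the identification $\sT_{V\times X}(T^*V;\cC)\simeq \Sh_{(V\times\bR)\times Z'}(V\times\bR;\cC)$ that you want simply does not hold. The Tamarkin category is a \emph{quotient} of $\Sh(V\times\bR;\cC)$ (equivalently, the left orthogonal to $\Sh_{\{\tau\le 0\}}$), not a full subcategory cut out by a closed conic microsupport bound. Already for $V=0$ and $X=\{0\}$ your proposed $Z'=(X\times\{0\})\cup(\{0\}\times[0,\infty))=[0,\infty)$ gives $\Sh_{\bR\times[0,\infty)}(\bR;\cC)$, whose $\Uloc$ vanishes by \autoref{Theorem: Efimov with support}, whereas $\Uloc(\sT_\cC)\simeq\Omega\Uloc(\cC)\neq 0$. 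No other conic closed $Z'\subset\bR$ repairs this, and for non-conic $X$ your $Z'$ is not even conic, so \autoref{Theorem: conic main thm} would not apply anyway. The obstacle you flagged as ``bookkeeping'' is in fact structural.

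There is also a logical reversal relative to the paper. The paper proves part~(1) first and then \emph{deduces} part~(2) from it, so taking (2) as input to get (1) would be circular here. The paper's route to~(1) is: the Fourier--Sato--Tamarkin transform gives an equivalence
\[
\sT_{V\times X}(T^*V;\Sp)\;\simeq\;\sT_{X\times V}(T^*V^\vee;\Sp)\;\simeq\;\Sh_{X\times V}(V^\vee;\Sp)\otimes\sT_\Sp\;\simeq\;\Sh(X;\sT_\Sp),
\]
the middle step because $X\times V$ has full fiber $V=(V^\vee)^\vee$, so the microsupport constraint reduces to a support constraint. Then \autoref{Theorem: Efimov no support} applied to the constant presheaf $\sT_\cC$ gives $\Uloc(\Sh(X;\sT_\cC))\simeq\Gamma_c(X;\Uloc(\sT_\cC))$, and a separate short computation (\autoref{prop: 1.2 from 1.1}) yields $\Uloc(\sT_\cC)\simeq\Omega\Uloc(\cC)$. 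No ``add a variable and pick a cone $Z'$'' step is needed; the Fourier transform already lands you in a category of sheaves on $X$ with coefficients in $\sT_\cC$, to which Efimov's formula applies directly.
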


\begin{RMK}Here, I want to emphasize that the coefficient category $\sT_\cC$ here has wide range of applications. The category $\sT_\cC$ was first introduced by Tamarkin \cite{tamarkin2013} for applications in symplectic geometry. It was also applied to resolve the irregular Riemann-Hilbert correspondence (with the name enhanced sheaves) \cite{DAgnoloKashiwara}. Recently, $\sT_\cC$ is considered in \cite{WildBetti} in terms of the notation $\mathbb{W}$ with a slightly different definition. Based on an idea of Vaintrob, we have $\mathbb{W}\simeq \sT_\Sp$, which is also equivalent to the category of complete almost ($\bR$-filtered) modules over the Novikov ring, see \cite[Proposition 4.12]{APT-Tatsuki-Zhang} for more detail. As explained in \cite{WildBetti}, the category $\sT_\cC$ would have interesting applications in analytic geometry since the Novikov ring could be thought of as a ``perfectoid ring over $\bZ$".    
\end{RMK}

\subsection*{Application}
Next, we present an application. For a fan\footnote{We do not ask the fan to be rational with respect to a fixed lattice in $\bR^n$.} $\Sigma$ in $\bR^n$, Vaintrob constructs a non-Noetherian $\bfk$-scheme (where $\bfk$ is a discrete ring), the so-called Novikov toric scheme, $X_\Sigma^\Nov$ and a subscheme $\partial_\Sigma$ defined by an idempotent ideal sheaf in \cite{Vaintrob-logCCC}. Then we can discuss the category of almost coherent sheaves on the almost content $(X_\Sigma^\Nov,\partial_\Sigma)$. If $\Sigma$ is rational, $X_\Sigma^\Nov$ is strongly related to the infinite root stack $\sqrt[\infty]{X_\Sigma}$ of the usual toric variety. We refer to \cite{APT-Tatsuki-Zhang} for more details.

We have the following result, which is first proven by Vaintrob and then by Kuwagaki and the author \cite{APT-Tatsuki-Zhang} using a different method. 
\begin{Thm}For a fan $\Sigma$ and $\Mod_\bfk$ the category of $\bfk$-modules, we have
\[\QCoha_{\bT^\Nov}(X_\Sigma^\Nov,\partial_\Sigma) \simeq \Sh_{\bR^n\times |\Sigma|} (\bR^n;\Mod_\bfk).\]    
\end{Thm}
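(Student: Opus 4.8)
This equivalence is Vaintrob's \cite{Vaintrob-logCCC}, reproved by a different method in \cite{APT-Tatsuki-Zhang}; I sketch the route that stays closest to the present circle of ideas, in which the microlocal cut-off lemma does the main work. Write $V=\bR^n$ and regard $\Sigma$ as a fan in $V^\vee$. Then $X_\Sigma^\Nov$ is glued from the affine charts $U_\sigma^\Nov=\operatorname{Spec}A_\sigma$ indexed by $\sigma\in\Sigma$, where $A_\sigma$ is the $\bfk$-algebra generated, together with its Novikov-type completion, by the monomials $T^m$ with $m$ in the real points of the dual cone $\sigma^\vee\subset V$; the boundary $\partial_\Sigma$ restricts on $U_\sigma^\Nov$ to the subscheme cut out by the idempotent ideal $\mathfrak{m}_\sigma\subset A_\sigma$ spanned by the $T^m$ with $m$ in the relative interior of $\sigma^\vee$, and $\QCoha_{\bT^\Nov}(-,\partial_\Sigma)$ denotes the resulting category of $\bT^\Nov$-equivariant almost-quasicoherent sheaves.

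First I would pass to the affine charts on both sides. Algebraically, $\{U_\sigma^\Nov\}_{\sigma\in\Sigma}$ is an open affine cover of $X_\Sigma^\Nov$ with all finite intersections again of this form, so Zariski descent --- which survives the almost-fication by $\mathfrak{m}_\sigma$ and the equivariant structure --- exhibits $\QCoha_{\bT^\Nov}(X_\Sigma^\Nov,\partial_\Sigma)$ as a limit over the poset $\Sigma$ of the categories $\QCoha_{\bT^\Nov}(U_\sigma^\Nov,\partial_\Sigma)$, the transition functor attached to a face relation $\tau\prec\sigma$ being restriction along the open immersion $U_\tau^\Nov\hookrightarrow U_\sigma^\Nov$. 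Microlocally, $\bR^n\times|\Sigma|=\bigcup_\sigma(\bR^n\times\sigma)$ is a finite closed cover and the microsupport condition is local in the cotangent direction; since each $\Sh_{V\times\sigma}(V;\Mod_\bfk)$ is a reflective subcategory of $\Sh(V;\Mod_\bfk)$ as noted before \autoref{Theorem: Efimov with support}, a Mayer--Vietoris argument exhibits $\Sh_{V\times|\Sigma|}(V;\Mod_\bfk)$ as the matching limit over $\Sigma$ of the categories $\Sh_{V\times\sigma}(V;\Mod_\bfk)$, the transition functor for $\tau\prec\sigma$ being the microlocal cut-off reflector $\Sh_{V\times\sigma}(V;\Mod_\bfk)\to\Sh_{V\times\tau}(V;\Mod_\bfk)$. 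So it suffices to produce, compatibly with these two families of transition functors, an equivalence $\QCoha_{\bT^\Nov}(U_\sigma^\Nov,\partial_\Sigma)\simeq\Sh_{V\times\sigma}(V;\Mod_\bfk)$ for each cone $\sigma$.

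For a fixed cone I would argue in three steps. First, splitting $\sigma^\vee=L\oplus C$ into its largest linear subspace $L$ and a pointed cone $C$: any sheaf with microsupport in $V\times\sigma$ is constant along $L$, while on the algebraic side the group-ring factor $\bfk[L]$ is graded-Morita trivial with respect to the equivariant structure, so we reduce to the case where $\sigma$ spans $V^\vee$. Second, the microlocal cut-off lemma of Kashiwara--Schapira \cite{KS90} --- the same ingredient that, as noted above, underlies \autoref{Theorem: conic main thm} --- identifies $\Sh_{V\times\sigma}(V;\Mod_\bfk)$ with the category of $\Mod_\bfk$-valued sheaves on $V$ equipped with the $\sigma^\vee$-topology, whose opens are the $U$ with $U+\sigma^\vee=U$. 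Third, unwinding definitions, a sheaf for the $\sigma^\vee$-topology is a suitably complete $\sigma^\vee$-graded $\bfk$-module; the algebra $A_\sigma$ acts through the monomials $T^m$, the ideal $\mathfrak{m}_\sigma$ is the idempotent ideal implementing the strictly positive shifts, and the almost-fication by $\mathfrak{m}_\sigma$ is exactly the operation that collapses the discrepancy between ``complete filtered $\bfk$-modules'' and ``all equivariant $A_\sigma$-modules'' --- this is the content of \cite{Vaintrob-logCCC} and \cite[\S4]{APT-Tatsuki-Zhang}, and it is the algebraic incarnation of the identification $\mathbb{W}\simeq\sT_\Sp$ with complete almost $\bR$-filtered Novikov modules recalled above. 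Finally I would check naturality in $\sigma$: a face relation $\tau\prec\sigma$ corresponds microlocally to coarsening the $\sigma^\vee$-topology to the $\tau^\vee$-topology (the cut-off reflector) and algebraically to the localization $A_\sigma\rightsquigarrow A_\tau$ inverting the monomials that become units over the smaller cone, and one verifies that the dictionary of the third step intertwines them.

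The main obstacle is the third step: matching the non-Noetherian, Novikov-completed data on the two sides, and in particular verifying that the almost structure attached to the toric boundary $\partial_\Sigma$ accounts precisely for the completeness condition carried by sheaves on the $\sigma^\vee$-topology. As this is exactly what is established in \cite{Vaintrob-logCCC} and \cite{APT-Tatsuki-Zhang}, for the purposes of this note one may take the equivalence as known; its role here is that, combined with \autoref{Theorem: conic main thm}, it yields $\Uloc\bigl(\QCoha_{\bT^\Nov}(X_\Sigma^\Nov,\partial_\Sigma)\bigr)\simeq\Gamma_c\bigl(|\Sigma|;\Uloc(\Mod_\bfk)\bigr)$.
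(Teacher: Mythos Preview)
The paper does not give its own proof of this theorem: it is stated as a result ``first proven by Vaintrob and then by Kuwagaki and the author \cite{APT-Tatsuki-Zhang} using a different method,'' and is then used as a black box to deduce the corollary on $\Uloc$. Your proposal is therefore not comparable to a proof in the paper, but rather to the proofs in the cited references; what you sketch --- Zariski descent on the toric side, a microlocal Mayer--Vietoris on the sheaf side, reduction to a single cone, and the identification via the $\sigma^\vee$-topology of $\Sh_{V\times\sigma}(V;\Mod_\bfk)$ with complete almost filtered modules --- is indeed the shape of the argument in \cite{APT-Tatsuki-Zhang}, and you correctly flag that the delicate point (your ``third step'') is exactly what those references establish. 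For the purposes of this note your final paragraph already says the right thing: the equivalence is imported from \cite{Vaintrob-logCCC,APT-Tatsuki-Zhang}, and its role here is solely to feed into \autoref{Theorem: conic main thm}.
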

Then as an application of \autoref{Theorem: conic main thm}, we have
\begin{Coro}For a fan $\Sigma$, we have
\[\Uloc (\QCoha_{\bT^\Nov}(X_\Sigma^\Nov,\partial_\Sigma))\simeq \Gamma_c(|\Sigma|; \Uloc(\Mod_\bfk) ).\]    
\end{Coro}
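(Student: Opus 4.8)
The plan is to reduce the Corollary to Theorem~\ref{Theorem: conic main thm} via the preceding theorem identifying $\QCoha_{\bT^\Nov}(X_\Sigma^\Nov,\partial_\Sigma)$ with a category of microsupport-constrained sheaves. Concretely, once we know
\[
\QCoha_{\bT^\Nov}(X_\Sigma^\Nov,\partial_\Sigma) \simeq \Sh_{\bR^n\times |\Sigma|}(\bR^n;\Mod_\bfk)
\]
as an equivalence of dualizable stable categories, applying $\Uloc$ to both sides gives an isomorphism in $\Motloc$ (since $\Uloc$ is a functor on $\Catdual$, hence sends equivalences to isomorphisms). So the only real content is to check that the support set $|\Sigma|$, the union of the cones of the fan $\Sigma$ sitting inside $\bR^n$ viewed as $(\bR^n)^\vee$, is a conic closed subset, so that Theorem~\ref{Theorem: conic main thm} applies with $V=\bR^n$ and $X=|\Sigma|$.

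First I would record that $|\Sigma|=\bigcup_{\sigma\in\Sigma}\sigma$ is a finite union of closed convex cones (each $\sigma$ being polyhedral, hence closed, and stable under positive scaling), so $|\Sigma|$ is a closed conic subset of $\bR^n\cong(\bR^n)^\vee$; this is exactly the hypothesis ``$X\subset V^\vee$ a conic closed set'' in Theorem~\ref{Theorem: conic main thm}. Next I would invoke the quoted theorem of Vaintrob (re-proven in \cite{APT-Tatsuki-Zhang}) to replace the left-hand side by $\Sh_{\bR^n\times|\Sigma|}(\bR^n;\Mod_\bfk)$; here I should note that $\Mod_\bfk$ for a discrete ring $\bfk$ is a dualizable (indeed compactly generated) stable category, so Theorem~\ref{Theorem: conic main thm} is applicable with $\cC=\Mod_\bfk$. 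Then I would simply substitute:
\[
\Uloc\!\left(\QCoha_{\bT^\Nov}(X_\Sigma^\Nov,\partial_\Sigma)\right)
\simeq \Uloc\!\left(\Sh_{\bR^n\times|\Sigma|}(\bR^n;\Mod_\bfk)\right)
\simeq \Gamma_c\!\left(|\Sigma|;\Uloc(\Mod_\bfk)\right),
\]
which is the assertion.

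The one point that needs a word of care — and the only place I would expect a subtlety — is the compatibility of the two equivalences with the structure of $\Catdual$: the cited Vaintrob/Kuwagaki--Zhang equivalence must be (or must be upgradeable to) an equivalence in $\Catdual$, i.e.\ an equivalence of dualizable stable categories implemented by a strongly continuous functor, since $\Uloc$ is only defined on that category. This is automatic if the equivalence is given by a colimit-preserving exact functor between dualizable stable categories whose right adjoint also preserves colimits, which is the natural output of the microlocal/Fourier-Sato-type identification used to prove the Vaintrob theorem; I would just cite \cite{APT-Tatsuki-Zhang} for this and not belabor it. Beyond that, there is no obstacle: the Corollary is a formal consequence of Theorem~\ref{Theorem: conic main thm} and the geometric identification, the latter being input from \cite{Vaintrob-logCCC,APT-Tatsuki-Zhang}.
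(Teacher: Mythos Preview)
Your proposal is correct and matches the paper's approach exactly: the paper also deduces the Corollary directly by combining the Vaintrob/Kuwagaki--Zhang equivalence with \autoref{Theorem: conic main thm} applied to $V=\bR^n$, $X=|\Sigma|$, $\cC=\Mod_\bfk$. Your caution about the equivalence living in $\Catdual$ is harmless but unnecessary: any equivalence of presentable stable categories is automatically strongly continuous, since its inverse serves as a colimit-preserving right adjoint.
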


\begin{RMK}Instead of \autoref{Theorem: conic main thm}, one can also deduce the Corollary from \eqref{equation: HD cone} and \cite[Proposition 4.11]{Efimov-K-theory} based on the fiber product decomposition of $\QCoha_{\bT^\Nov}(X_\Sigma^\Nov,\partial_\Sigma)$ explained in \cite{APT-Tatsuki-Zhang}.
    
\end{RMK}

\subsubsection*{Category convention}
In this article, we always mean $\infty$-categories when referring to categories. We denote by $\Catdual$ the category of dualizable stable categories consisting of presentable stable categories that are dualizable with respect to the Lurie tensor product and strongly continuous functors between them. In particular, compactly generated stable categories are dualizable. We denote $\Sp$ as the category of spectra, and its unit, the spherical spectrum, is denoted by $\bS$. We denote the countable cardinality by $\omega$.

\subsection*{Acknowledgements} The author thanks Alexander I. Efimov and Peter Scholze for helpful discussions. This work was supported by the Novo Nordisk Foundation grant NNF20OC0066298 and the VILLUM FONDEN Investigator grant 37814.

\section{Sheaves and microsupport}
For a bi-complete stable category $\cC$ and a topological space $X$, we denote the category of $\cC$-valued sheaves by $\Sh(X;\cC)$ \cite[7.3.3.1]{HTT}. It is explained in \cite{6functor-infinity} that in this case, we have an equivalence $\Sh(X;\Sp)\otimes \cC \simeq \Sh(X;\cC)$.

It is further explained in loc. cit. that in the $\cC$-valued sheaf setting, we can define the functors $f^\cC_*, f^\cC_!$, and $f^*_\cC, f^!_\cC$. When $\cC$ is, in addition, symmetric monoidal, we can define the monoidal product $\otimes_\cC$ and the internal hom $\HHOM_\cC$ as the right adjoint of $\otimes_\cC$, yielding the full six-functor formalism. We also refer to \cite{Six-FunctorScholze} for further details on the six-functor formalism.

Regarding the microlocal theory of sheaves in the $\infty$-categorical setting, we remark that, based on \cite{Amicrolocallemma_infinitycat}, all arguments in \cite{KS90} extend to the case where $\cC$ is compactly generated. Therefore, the microlocal sheaf theory with compactly generated stable coefficients, for example, $\cC = \Sp$, can be developed without difficulty.

However, as suggested in \cite[Remark 4.24]{Efimov-K-theory}, it is possible to develop a microlocal sheaf theory with more general coefficients, where the original idea comes from \cite{Beilinson_epsilonfactor}.

Here, we present the $\Omega$-lens definition. Let $M$ be a smooth manifold, and set $\dot{T}^*M = T^*M \setminus 0_M$.

\begin{Def}{\cite[Definition 3.1]{guillermouviterbo_gammasupport}} \label{Def: Omega-lens}
Let $\Omega \subset \dot{T}^*M$ be an open conic subset. We call a locally closed subset $C(g)$ of $M$ an \emph{$\Omega$-lens} if the following conditions are satisfied: $\overline{C(g)}$ is compact, and there exists an open neighborhood $U$ of $\overline{C(g)}$ and a $C^\infty$ function $g\colon U \times [0,1] \to \bR$ such that
\begin{enumerate}
\item $dg_t(x) \in \Omega$ for all $(x,t) \in U \times [0,1]$, where $g_t = g|_{U\times\{t\}}$;
\item $\{g_t<0\} \subset \{g_{t'}<0\}$ if $t \leq t'$;
\item the hypersurfaces $\{g_t = 0\}$ coincide on $U \setminus \overline{C(g)}$;
\item $C(g) = \{g_1<0\} \setminus \{g_0<0\}$.
\end{enumerate}
\end{Def}

\begin{Def}\label{def: microsupport}
Let $\cC$ be a category that admits small limits, and let $F \in \Sh(M;\cC)$. We say $F$ is regular on $\Omega$ if for any $\Omega$-lens $C(g)$ defined by a smooth function $g$, the restriction morphism
\[
\Gamma(\{g_1<0\}, F) \rightarrow \Gamma(\{g_0<0\}, F)
\]
is an equivalence.

For a conic closed subset $Z\subset T^*M$, we set $\Sh_Z(M;\cC)$ as the full subcategory of $\Sh(M;\cC)$ spanned by $F$ that is regular on $\dot{T}^*M\setminus Z$. 

We define $\dot{T}^*M \setminus \dot{\SS}_\cC(F)$ as the maximal open subset (if it exists) $\Omega \subset \dot{T}^*M$ such that $F$ is regular on $\dot{T}^*M \setminus \dot{\SS}_\cC(F)$. We then define the microsupport of $F$ by $\SS_\cC(F) = \dot{\SS}_\cC(F) \cup \supp(F)$. When the category $\cC$ is clear from context, we simply write $\SS(F)$.
\end{Def}

\begin{RMK}
    In \autoref{Def: Omega-lens}, $g$ relies on open neighborhood $U$ of $\overline{C(g)}$. But in \autoref{def: microsupport}, regularity is independent of $U$ and is only dependent on $\{g_i<0\}$ for $i=0,1$.
\end{RMK}

When $\cC = \Sp$, there exists a pointwise definition of microsupport $\SS_{KS}(F) \subset T^*M$, as explained in \cite{KS90,Amicrolocallemma_infinitycat}. In particular, $\SS_{KS}(F)$ exists for all $F$. The argument therein shows that $\SS_{KS}(F) \cap 0_M = \supp(F)$, and hence $\SS_{KS}(F) \cap 0_M = \SS_{\Sp}(F) \cap 0_M$. For the non-zero part $\dot{\SS}_{KS}(F)=\SS_{KS}(F)\setminus 0_M$, we have the following:

\begin{Lemma}{\cite[Lemma 3.2]{guillermouviterbo_gammasupport}}\label{lemma: Omega-lens1}
Let $F \in \Sh(M;\Sp)$ and let $\Omega \subset T^*M \setminus 0_M$ be an open conic subset. Then $\dot{\SS}_{KS}(F) \cap \Omega = \emptyset$ if and only if $\HOM(\bS_{C(g)}, F) \simeq 0$ for any $\Omega$-lens $C(g)$ (i.e. $F$ is regular on $\Omega$).
\end{Lemma}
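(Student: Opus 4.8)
The plan is to first rewrite the statement in terms of restriction morphisms, and then establish the resulting two‑sided implication by non‑characteristic deformation in one direction and by a localization to small lenses in the other. For locally closed $S\subseteq M$ write $1_S$ for the extension by zero of the constant sheaf $\bS_S$, so that $\HOM(1_V,F)\simeq\Gamma(V;F)$ for open $V$ (by the adjunction $j_{V!}\dashv j_V^{!}=j_V^{*}$). Given an $\Omega$‑lens $C=C(g)$ with $g\colon U\times[0,1]\to\bR$, put $V_i=\{g_i<0\}$; conditions (2) and (4) of \autoref{Def: Omega-lens} say that $V_0\subseteq V_1$ are open with $V_1\setminus V_0=C$, so the open inclusions give a cofiber sequence $1_{V_0}\to 1_{V_1}\to 1_C$ in $\Sh(M;\Sp)$, and applying $\HOM(-,F)$ yields $\HOM(1_C,F)\simeq\operatorname{fib}\bigl(\Gamma(\{g_1<0\};F)\to\Gamma(\{g_0<0\};F)\bigr)$. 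Hence $\HOM(1_C,F)\simeq0$ iff that restriction is an equivalence, and the Lemma amounts to the assertion that, for $\cC=\Sp$, the pointwise microsupport $\dot{\SS}_{KS}$ coincides with the $\Omega$‑lens microsupport of \autoref{def: microsupport}; equivalently, $\dot{\SS}_{KS}(F)\cap\Omega=\emptyset$ iff $\Gamma(\{g_1<0\};F)\to\Gamma(\{g_0<0\};F)$ is an equivalence for every $\Omega$‑lens $C(g)$.

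For the forward direction, assume $\dot{\SS}_{KS}(F)\cap\Omega=\emptyset$ and fix an $\Omega$‑lens $C(g)$. The increasing family of opens $V_t=\{g_t<0\}$, $t\in[0,1]$, is a non‑characteristic deformation relative to $F$: condition (1) gives $dg_t(x)\in\Omega\subset\dot{T}^{*}M$, so each $\{g_t=0\}$ is a smooth hypersurface and $\SS(F)$ is disjoint from the graph of $dg_t$, while (2) and (3) make the family monotone and stationary outside the compact set $\overline{C(g)}$. By the Kashiwara–Schapira non‑characteristic deformation lemma — valid for spectral coefficients by \cite{Amicrolocallemma_infinitycat} — the restriction $\Gamma(V_1;F)\to\Gamma(V_0;F)$ is then an equivalence, i.e.\ $\HOM(1_C,F)\simeq0$. (One way to see this: pull $F$ back to $N=M\times\bR_\tau$ along the projection $q\colon N\to M$, extend $g$ smoothly over $U\times\bR$ with $g_t=g_0$ for $t\le0$ and $g_t=g_1$ for $t\ge1$, preserving $dg_t(x)\in\Omega$ and $\partial_tg_t\le0$, and cut off $q^{*}F$ to the open set $\{g<0\}$, whose boundary is smooth since $dg_t$ never vanishes. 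The microsupport estimate for such cut‑offs, together with $\partial_tg_t\le0$, $dg_t(x)\in\Omega$, and $\SS(F)\cap\Omega=\emptyset$, forces the covector $d\tau$ out of the microsupport of the resulting sheaf everywhere; propagation along $\tau$ then relates the sections over $\{\tau<b\}$, $b>1$, which compute $\Gamma(V_1;F)$, with those over $\{\tau<a\}$, $a<0$, which compute $\Gamma(V_0;F)$.)

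For the reverse direction, assume $\Gamma(\{g_1<0\};F)\to\Gamma(\{g_0<0\};F)$ is an equivalence for every $\Omega$‑lens, and fix $(x_0,\xi_0)\in\Omega$. I must verify the defining condition of $\SS_{KS}$ near $(x_0,\xi_0)$: that $(R\Gamma_{\{\psi\ge0\}}F)_{x_1}\simeq0$ whenever $\psi(x_1)=0$ and $(x_1,d\psi(x_1))$ lies in a fixed neighborhood of $(x_0,\xi_0)$. Passing to a chart and using the standard reduction (cf.\ \cite[\S5.1]{KS90}) that it suffices to test on functions $\psi$ whose differential stays inside an arbitrarily small cone about $\xi_0$ on a neighborhood of $x_1$, I may assume $d\psi\in\Omega$ near $x_1$. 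For small $\varepsilon>0$, pick a nonnegative bump $\rho_\varepsilon$ supported in the $\varepsilon$‑ball about $x_1$ with $\rho_\varepsilon(x_1)>0$ and with $\|d\rho_\varepsilon\|$ so small that $d\psi(x)-s\,d\rho_\varepsilon(x)\in\Omega$ for all $x$ near $x_1$ and all $s\in[0,1]$ — possible since $\Omega$ is open and conic. Then $g^{(\varepsilon)}_s=\psi-s\rho_\varepsilon$ defines an $\Omega$‑lens with $\{g^{(\varepsilon)}_0<0\}=\{\psi<0\}$, $\{g^{(\varepsilon)}_1<0\}=\{\psi<\rho_\varepsilon\}$, and difference region $C^{(\varepsilon)}=\{0\le\psi<\rho_\varepsilon\}$, which contains $x_1$ and is contained in the $\varepsilon$‑ball; so the hypothesis together with the first paragraph gives $\HOM(1_{C^{(\varepsilon)}},F)\simeq0$, i.e.\ the sections of $F$ over $\{\psi<\rho_\varepsilon\}$ supported on $\{\psi\ge0\}$ vanish. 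As $\varepsilon\to0$, with $\rho_\varepsilon$ decreasing and $\rho_\varepsilon(x_1)\to0$, the sets $C^{(\varepsilon)}$ shrink to $\{x_1\}$, and a cofinality argument identifies $\varinjlim_\varepsilon\HOM(1_{C^{(\varepsilon)}},F)$ with the germ $(R\Gamma_{\{\psi\ge0\}}F)_{x_1}$; since each term is $0$, so is the germ, and letting $(x_1,d\psi(x_1))$ range over a small enough neighborhood of $(x_0,\xi_0)$ gives $(x_0,\xi_0)\notin\SS_{KS}(F)$.

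I expect the main obstacle to lie in the reverse direction, and within it in two routine‑but‑delicate points: the reduction of the pointwise definition of $\SS_{KS}$ to test functions whose differential is confined to $\Omega$ (a sandwiching comparison of $\{\psi\ge0\}$ with its quadratic modifications, in the spirit of \cite[\S5.1]{KS90}), and the cofinality statement that the lens difference‑regions $C^{(\varepsilon)}$ form a system computing the germ $(R\Gamma_{\{\psi\ge0\}}F)_{x_1}$, so that vanishing of the finitely‑many‑lens data forces the infinitesimal vanishing. The forward direction is, by contrast, a direct invocation of the non‑characteristic deformation lemma, whose only computational ingredient is the microsupport estimate for the cut‑off sheaf; no machinery beyond \cite{KS90} and \cite{guillermouviterbo_gammasupport} is needed.
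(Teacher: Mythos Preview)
The paper does not prove this lemma at all: it is stated with a citation to \cite[Lemma 3.2]{guillermouviterbo_gammasupport} and used as a black box, so there is no ``paper's own proof'' to compare against. Your argument is essentially the one given in that reference: the forward direction is exactly the non-characteristic deformation (microlocal Morse) lemma applied to the family $\{g_t<0\}$, and the reverse direction proceeds by manufacturing small $\Omega$-lenses of the form $\{0\le\psi<\rho_\varepsilon\}$ around a given point and passing to the germ.

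Two comments on the reverse direction, where you yourself flag the delicate points. First, condition (1) in \autoref{Def: Omega-lens} requires $dg^{(\varepsilon)}_t(x)\in\Omega$ on an open neighborhood $U$ of $\overline{C(g^{(\varepsilon)})}$, not merely near $x_1$; since $\overline{C(g^{(\varepsilon)})}$ is contained in the $\varepsilon$-ball you should be explicit that the reduction step guarantees $d\psi\in\Omega$ on a fixed ball and that $\varepsilon$ is taken small relative to that ball. Second, your cofinality claim is correct but the phrasing ``the sets $C^{(\varepsilon)}$ shrink to $\{x_1\}$'' is slightly misleading: the opens $\{\psi<\rho_\varepsilon\}$ all contain $\{\psi<0\}$ and do not form a neighborhood basis of $x_1$. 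What you actually use is excision, namely $\HOM(1_{C^{(\varepsilon)}},F)\simeq\Gamma(\{\psi<\rho_\varepsilon\};R\Gamma_{\{\psi\ge0\}}F)$, and then the observation that $\{\psi<\rho_\varepsilon\}\cap\{\psi\ge0\}$ is a neighborhood basis of $x_1$ inside the support of $R\Gamma_{\{\psi\ge0\}}F$, which is what makes the colimit compute the stalk. With these clarifications the argument is complete.
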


\begin{Coro}
For $F \in \Sh(M;\Sp)$, we have $\SS_{KS}(F) = \SS_{\Sp}(F)$. Consequently, $\SS_{\Sp}(F)$ exists.
\end{Coro}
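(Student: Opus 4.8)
Here is how I would approach the Corollary.

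The plan is to deduce it from \autoref{lemma: Omega-lens1} together with one open--closed localization triangle. Since it was already observed in the excerpt that $\SS_{KS}(F)\cap 0_M=\supp(F)=\SS_{\Sp}(F)\cap 0_M$, it remains only to prove $\dot\SS_{KS}(F)=\dot\SS_{\Sp}(F)$ as conic closed subsets of $\dot T^*M$. I would compare their open complements: for a fixed open conic $\Omega\subset\dot T^*M$ I would establish the equivalence of three conditions: (i) $\Omega\cap\dot\SS_{KS}(F)=\emptyset$; (ii) $\HOM(1_C,F)\simeq 0$ for every $\Omega$-lens $C$ (where $1_Y$ denotes the constant sheaf $\underline{\bS}_Y$ extended by zero from a locally closed $Y\subset M$); (iii) for every $\Omega$-lens $C(g)$ the restriction $\Gamma(\{g_1<0\},F)\to\Gamma(\{g_0<0\},F)$ is an equivalence. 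Here (i)$\Leftrightarrow$(ii) is exactly \autoref{lemma: Omega-lens1}. Granting also (ii)$\Leftrightarrow$(iii), note that the set of open conic $\Omega$ satisfying (i) has a largest element, namely $\dot T^*M\setminus\dot\SS_{KS}(F)$; since (iii) is the equivalent condition, it has the same largest $\Omega$, which by \autoref{def: microsupport} is $\dot T^*M\setminus\dot\SS_{\Sp}(F)$. Hence $\dot\SS_{KS}(F)=\dot\SS_{\Sp}(F)$, and the Corollary follows.

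It thus remains to prove (ii)$\Leftrightarrow$(iii), which is local on a lens. Given an $\Omega$-lens $C=C(g)$, put $U_t=\{g_t<0\}$; by conditions (2) and (4) of \autoref{Def: Omega-lens} the $U_t$ are open in $M$, $U_0\subset U_1$, and $C=U_1\setminus U_0$ is closed in $U_1$ with open complement $U_0$. Applying the open--closed localization triangle on $U_1$ to the constant sheaf $\underline{\bS}$ and then extending by zero to $M$ (which is exact) gives an exact triangle $1_{U_0}\to 1_{U_1}\to 1_C\xrightarrow{+1}$ in $\Sh(M;\Sp)$. Applying $\HOM(-,F)$ and using the adjunction $j_!\dashv j^*$ for an open embedding $j\colon U\hookrightarrow M$, which identifies $\HOM(1_U,F)\simeq\Gamma(U,F)$ naturally in $U$ so that the induced map becomes restriction, I obtain the exact triangle
\[
\HOM(1_C,F)\longrightarrow\Gamma(U_1,F)\longrightarrow\Gamma(U_0,F)\xrightarrow{\;+1\;},
\]
whence $\HOM(1_C,F)\simeq\fib{\Gamma(U_1,F)\to\Gamma(U_0,F)}$, which vanishes if and only if that restriction is an equivalence. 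Since $U_t=\{g_t<0\}$, this is condition (iii) for the lens $C(g)$; ranging over all lenses yields (ii)$\Leftrightarrow$(iii).

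The step requiring the most care is matching the class of lenses in the two characterizations: \autoref{lemma: Omega-lens1} is stated for \emph{all} $\Omega$-lenses, whereas \autoref{def: microsupport} only tests lenses $C(g)$ defined by a smooth function. One direction is automatic, and for the other I would rely on the fact that $\Omega$-lenses are by convention cut out by smooth deformation functions as in condition (1) of \autoref{Def: Omega-lens}, so that the two classes in fact coincide; alternatively one checks that a general $\Omega$-lens can be $C^\infty$-perturbed, rel the complement of $\overline{C(g)}$, without affecting whether $\HOM(1_{C(g)},F)$ vanishes. Apart from this point everything is formal: the localization triangle, the identification $\HOM(1_U,F)\simeq\Gamma(U,F)$, and the recognition of the connecting map as a restriction morphism are all standard.
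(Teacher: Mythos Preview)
Your argument is correct and is exactly the natural way to turn \autoref{lemma: Omega-lens1} into the Corollary; the paper itself gives no proof, simply recording the statement as an immediate consequence of that Lemma together with the already-noted agreement on the zero section. Your step (ii)$\Leftrightarrow$(iii) via the open--closed localization triangle and the identification $\HOM(1_U,F)\simeq\Gamma(U,F)$ is precisely the unwinding the reader is expected to supply, and your remark about the smoothness of $g$ is a fair caveat but harmless here since condition~(1) of \autoref{Def: Omega-lens} already forces $g_t$ to be differentiable.
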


On the other hand, there is a version of the covariant Verdier duality with microsupport: 
\begin{Prop}{\cite[Theorem B.8]{APT-Tatsuki-Zhang}}\label{lemma: Omega-lens2}For a stable category $\cC$ that admits both small limits and colimits, we have $F$ is regular on $\Omega$ if and only if $\Gamma_c( \{g_0<0\},F) \rightarrow \Gamma_c( \{g_1<0\},F)$ is an equivalence for any $-\Omega$-lens $C(g)$.
\end{Prop}

The \autoref{def: microsupport} shows that $\Sh_Z(M;\cC)$ is closed under limits, and \autoref{lemma: Omega-lens2} shows that $\Sh_Z(M;\cC)$ is closed under colimits when $\cC$ is stable. In particular, when $\cC$ is dualizable stable, we have $\Sh_Z(M;\cC)$ is dualizable stable.

For any presentable stable category $\cC$, under the natural identification $\Sh(M;\Sp)\otimes \cC \simeq \Sh(M;\cC)$, we have $i_Z^\Sp\otimes\cC: \Sh_Z(M;\Sp)\otimes \cC \rightarrow \Sh(M;\cC)$ is fully-faithful and admits both left and right adjoints. Moreover, we have

\begin{Prop}[{{\cite[Remark 4.24]{Efimov-K-theory}}}]\label{defintion: ad-hoc def of SS}For a presentable stable category $\cC$ and a conic closed subset $Z\subset T^*M$, the essential image of the functor $\Sh_Z(M;\Sp)\otimes \cC \rightarrow \Sh(M;\cC)$ is identified with $\Sh_Z(M;\cC)$. Equivalently, we have $i_{Z}^\cC\simeq i_{Z}^\Sp\otimes \cC$ and
\[\Sh_Z(M;\cC)\simeq \Sh_Z(M;\Sp)\otimes \cC . \] 
\end{Prop}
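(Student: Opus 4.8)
The plan is to realize both $\Sh_Z(M;\cC)$ and the essential image $E$ of $i_Z^\Sp\otimes\cC$ as reflective subcategories of the presentable stable category $\Sh(M;\cC)$ having the same left orthogonal; since a reflective subcategory of a presentable stable category is the right orthogonal of its own left orthogonal, this forces $E=\Sh_Z(M;\cC)$, and then $i_Z^\cC\simeq i_Z^\Sp\otimes\cC$ because both are the fully faithful inclusion of that subcategory. That $i_Z^\Sp\otimes\cC$ is fully faithful is formal: by the discussion preceding the statement, $i_Z^\Sp$ is a colimit-preserving fully faithful functor with a (necessarily colimit-preserving) left adjoint $\ell_Z$, full faithfulness meaning that the counit $\ell_Z\circ i_Z^\Sp\to\id$ is an equivalence; the operation $-\otimes\cC\colon\PrSt\to\PrSt$ preserves colimit-preserving adjunctions and equivalences between colimit-preserving functors, so it produces the adjunction $\ell_Z\otimes\cC\dashv i_Z^\Sp\otimes\cC$ with counit $(\ell_Z\circ i_Z^\Sp)\otimes\cC\to\id$ still an equivalence. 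Hence $E$ is reflective, its reflector is the corestriction of $\ell_Z\otimes\cC$, and ${}^{\perp}E=\ker(\ell_Z\otimes\cC)$.

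Next I would rephrase \autoref{def: microsupport} as a right-orthogonality condition that is manifestly uniform in the coefficients. For an open $V\subseteq M$ and any $F\in\Sh(M;\cC)$ there is a natural identification $\Gamma(V,F)\simeq p_*\HHOM_{\Sh(M;\Sp)}(\bfk_V,F)$, where $\bfk_V=j_{V!}\bfk_V$ and $\HHOM_{\Sh(M;\Sp)}$ is the cotensor of the $\Sh(M;\Sp)$-module $\Sh(M;\cC)$; hence, for $V_0\subseteq V_1$, the restriction $\Gamma(V_1,F)\to\Gamma(V_0,F)$ is an equivalence in $\cC$ if and only if $F$ is right orthogonal to $s\otimes c$ for every $c\in\cC$, where $s:=\cofib{\bfk_{V_0}\to\bfk_{V_1}}\in\Sh(M;\Sp)$ and $s\otimes c$ denotes the image of $(s,c)$ under $\Sh(M;\Sp)\otimes\cC\simeq\Sh(M;\cC)$; the support clause $\supp(F)\subseteq Z\cap0_M$ is handled in the same way (orthogonality to $\bfk_W\otimes c$ for $W$ open in $M\setminus(Z\cap0_M)$). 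Collecting the resulting objects $s$ into a class $S_0=S_0(Z)\subseteq\Sh(M;\Sp)$, which depends only on $Z$ and not on $\cC$, \autoref{def: microsupport} says precisely that $\Sh_Z(M;\cC)$ is the right orthogonal of $\{\,s\otimes c:s\in S_0,\ c\in\cC\,\}$; equivalently, ${}^{\perp}\Sh_Z(M;\cC)$ is the localizing subcategory of $\Sh(M;\cC)$ generated by $\{s\otimes c\}$. Taking $\cC=\Sp$ gives $\Sh_Z(M;\Sp)=S_0^{\perp}$, so ${}^{\perp}\Sh_Z(M;\Sp)=\ker\ell_Z$ is the localizing subcategory of $\Sh(M;\Sp)$ generated by $S_0$.

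It remains to compute ${}^{\perp}E=\ker(\ell_Z\otimes\cC)$. The sequence $\ker\ell_Z\to\Sh(M;\Sp)\xrightarrow{\ell_Z}\Sh_Z(M;\Sp)$ is a localization (Verdier) sequence in $\PrSt$, and the inclusion $\ker\ell_Z\hookrightarrow\Sh(M;\Sp)$ has the colimit-preserving right adjoint $\fib{\id\to i_Z^\Sp\circ\ell_Z}$ — this is where it matters that $\ell_Z$ itself has the colimit-preserving right adjoint $i_Z^\Sp$. Applying $-\otimes\cC$, which preserves cofiber sequences and, by the previous sentence, the full faithfulness of the first arrow, produces a localization sequence $(\ker\ell_Z)\otimes\cC\to\Sh(M;\cC)\xrightarrow{\ell_Z\otimes\cC}\Sh_Z(M;\Sp)\otimes\cC$; hence $\ker(\ell_Z\otimes\cC)$ is the essential image of the fully faithful functor $(\ker\ell_Z)\otimes\cC\to\Sh(M;\cC)$. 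Since $\ker\ell_Z$ is generated under colimits by $S_0$, this essential image is the localizing subcategory generated by $\{\,s\otimes c:s\in S_0,\ c\in\cC\,\}$, which by the previous paragraph equals ${}^{\perp}\Sh_Z(M;\cC)$. Therefore ${}^{\perp}E={}^{\perp}\Sh_Z(M;\cC)$, and taking right orthogonals of these two reflective subcategories yields $E=\Sh_Z(M;\cC)$, as desired.

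The conceptual content sits in the middle step together with the full-faithfulness assertion in the last step: one must see that, after identifying $\cC$-valued sections with a cotensor against constant sheaves, the $\Omega$-lens conditions defining the microsupport become genuine right-orthogonality conditions against a family $\{s\otimes c\}$ with the same $s$'s for every coefficient category — reducing the comparison to the behaviour of the tensor product of presentable categories — and one must know that $-\otimes\cC$ carries the Verdier sequence attached to the reflective-and-coreflective subcategory $\Sh_Z(M;\Sp)$ to a Verdier sequence, for which the colimit-preservation of $\ell_Z$ and of the colocalization onto $\ker\ell_Z$ is exactly what is used. Everything else is routine manipulation of Bousfield localizations.
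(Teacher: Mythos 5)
Your proof is correct, and it is a dual packaging of the paper's argument, though the presentation is noticeably different. The paper passes to the cosheaf side: by dualizability it writes $\Sh(M;\cC)\simeq\Fun^L(\Sh(M;\Sp),\cC)$ and $\Sh_Z(M;\Sp)\otimes\cC\simeq\Fun^L(\Sh_{-Z}(M;\Sp),\cC)$ (note the $Z\mapsto -Z$ flip from self-duality), then identifies both subcategories with the $W_Z$-local colimit-preserving functors via \autoref{lemma: Omega-lens2} and HTT~5.5.4.2. You instead stay entirely on the sheaf side: you rewrite the $\Omega$-lens condition as a right-orthogonality condition in $\Sh(M;\cC)$ against a coefficient-independent family $\{s\otimes c\}$, observe that the Verdier sequence $\ker\ell_Z\to\Sh(M;\Sp)\xrightarrow{\ell_Z}\Sh_Z(M;\Sp)$ is preserved by $-\otimes\cC$ (using that $\ell_Z$ has a colimit-preserving right adjoint), and then match left orthogonals of the two reflective subcategories. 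Both routes hinge on the same two facts — the microsupport constraint is cut out by a set of lens morphisms in $\Sh(M;\Sp)$ depending only on $Z$, and $\Sh_Z(M;\Sp)$ is simultaneously reflective and coreflective so its Verdier sequence lies in $\Catdual$ — but your version avoids the cosheaf identification and the $-Z$ sign at the cost of spelling out the tensoring of the Verdier sequence, which the paper leaves implicit in the dualizability step. A minor plus of your version is the explicit handling of the zero-section/support clause of $\SS_\cC$, which the paper's description of $W_Z$ does not separately address.
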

\begin{proof}By \autoref{lemma: Omega-lens2}, we have that $(i_{Z}^\Sp)^l$ can be characterized as a presentable left Bousfield localization that is local with respect to morphisms $\bS_{ \{g_0<0\}}\rightarrow \bS_{\{g_1<0\}}$ such that $g=g_t$ defines a $- \dot{T}^*M\setminus Z$-lense. Let us denote by $W_{Z}$ the set of morphisms (since $\Omega$-lenses form a set).

We identify $\Sh(M;\cC)=\Fun^L(\Sh(M;\Sp),\cC)$ and $\Sh_Z(M;\Sp)\otimes \cC=\Fun^L(\Sh_{-Z}(M;\Sp),\cC)$ (i.e. corresponding cosheaf categories) by dualizability, we have that $\Sh_Z(M;\Sp)\otimes \cC$ consists exactly colimit preserving functors $\Sh(M;\Sp)\rightarrow \cC$ that are local with respect to $W_{Z}$. However, those $W_{Z}$-local colimit preserving functors form exactly $\Sh_Z(M;\cC)$ in $\Sh(M;\cC)=\Fun^L(\Sh(M;\Sp),\cC)$ by the definition of $\Sh_Z(M;\cC)$. The result then follows from the above discussion and \cite[Proposition 5.5.4.2]{HTT}.\end{proof}

\begin{Coro}If $\cC$ is presentable stable, then we have $\SS_{\cC}(F)$ exists. Consequently, all microsupport estimation results in \cite{KS90} are correct for $\cC$-valued sheaves.   
\end{Coro}
\begin{proof}Under the condition that $\cC$ is presentable stable, we identify $\Sh_Z(M;\cC)\simeq \Sh_Z(M;\Sp)\otimes \cC$. Therefore, if the sheaf $F$ is regular on $\Omega_1$ and $\Omega_2$, $F$ is regular on $\Omega_1\cup \Omega_2$ because this is true when $\cC=\Sp$ by pointwise verification. Then we just take $\dot{SS}_\cC(F)$ as the intersection of closed sets that $F$ is not regular. 

For the statement for estimation, it also follows from corresponding $\Sp$-valued estimation due to comparison of $\cC$-valued $6$-functors and $\Sp$-valued $6$-functors in \cite{6functor-infinity}.
\end{proof}

\begin{RMK}After the first version of the article, we learn that Tamarkin prove the existence of $\SS_{\cC}(F)$ only when $\cC$ is (unstably!) presentable in \cite{Tamarkin_notes}. 

The idea is the following: Recall the set of morphism $W_Z$ described in \autoref{defintion: ad-hoc def of SS} and we denote the saturation $\overline{W_Z}$ (i.e, the minimal strongly saturation class of morphisms, in the sense of \cite[Definition 5.5.4.5]{HTT}, that contains $W_Z$). Then using a partition of unity argument, one can prove that for any set of closed sets $Z_i$ and $Z\coloneqq \cap_i Z_i$, we have $\overline{W_Z}=\overline{\cup_i W_{Z_i}}$, which implies that the regular open sets of $F$ are closed under union.
\end{RMK}

\begin{RMK}We should be careful when discussing monoidal structures on $\cC$ and related $\cC$-linear dualizability. For example, one can assume $\cC$ is (dualizable stable) locally rigid, see \cite{Ramzi_locally_rigid_cat}. In this note, we only (need to) discuss the monoidal structure on $\Sp$, which is known to be rigid.
\end{RMK}

Now, we consider the following category introduced by Tamarkin. For a dualizable stable category $\cC$, we set
\[\sT_\cC\coloneqq \Sh(\bR;\cC)/\Sh_{\bR\times (-\infty,0]}(\bR;\cC).\]

Using the bi-fiber sequence $\sT_\Sp\rightarrow \Sh(\bR;\Sp)\rightarrow \Sh_{\bR\times (-\infty,0]}(\bR;\Sp)$ in $\Catdual$, we have that $\sT_\cC\simeq \sT_\Sp\otimes \cC$, and for a smooth manifold $M$ we have 
\begin{equation}\label{equations: def of Tam}
    \Sh(M;\Sp)\otimes \sT_\cC \simeq \Sh(M; \sT_\cC) \simeq \Sh(M \times \bR;\cC)/\Sh_{T^*M\times \bR\times (-\infty,0]}(M \times \bR;\cC).
\end{equation}

To avoid certain confusion, we denote the three equivalent categories by
\[\sT(T^*M;\cC).\]

We refer to \cite{Hochschild-Kuo-Shende-Zhang} for both the motivation of this notation and the proof of \eqref{equations: def of Tam} (for the case $\cC = \Sp$, and the general case follows from \autoref{defintion: ad-hoc def of SS}.)

Therefore, for $[F]\in \sT(T^*M;\cC)$, we can discuss the positive part of microsupport, say
\[SS_+([F])\coloneqq SS(F)\cap \{\tau>0\}\]
is a well-defined closed conic subset of $T^*M\times \bR\times (0,\infty)$.

The construction is introduced to study non-conic subsets $Z\subset T^*M$. Precisely, for a subset $Z\subset T^*M$, we define its cone as
\[\widehat{Z}\coloneqq \{(q,p,t,\tau)\in T^*M\times \bR\times (0,\infty): (q,p/\tau)\in Z\}.\]
The conic set $\widehat{Z}$ is closed in $T^*M\times \bR\times (0,\infty)$ if $Z$ is closed in $T^*M$, and in this case we denote $\sT_{Z}(T^*M;\cC)$ the full subcategory spanned by those $[F]$ such that $SS_+([F])\subset \widehat{Z}$.

\begin{Prop}\label{prop: kunneth}For a conic closed set $Z\subset T^*M$, we have
\[\sT_{Z}(T^*M;\cC) \simeq \Sh_{Z}(M;\Sp)\otimes \sT_\cC \simeq \Sh_{Z}(M;\cC)\otimes \sT_\Sp.\]  
\end{Prop}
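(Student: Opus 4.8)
The plan is to reduce everything to the already-established identification $\Sh_Z(M;\cC)\simeq \Sh_Z(M;\Sp)\otimes\cC$ of \autoref{defintion: ad-hoc def of SS}, together with the observation that $\sT_\cC\simeq \sT_\Sp\otimes\cC$ and $\sT(T^*M;\cC)\simeq\Sh(M;\Sp)\otimes\sT_\cC$ from \eqref{equations: def of Tam}. Since $Z$ is conic and closed, the category $\sT_Z(T^*M;\cC)$ is the full subcategory of $\sT(T^*M;\cC)$ cut out by the positive-microsupport condition $SS_+([F])\subset\widehat Z$; because $Z$ is conic, $\widehat Z = (Z\setminus 0_M)\times\bR\times(0,\infty)$ up to the zero section, so the condition is really just the ordinary microsupport bound "$SS([F])\subset (Z\times\bR) \cup (T^*M\times T^*\bR|_{\tau\le 0})$" on a representative $F\in\Sh(M\times\bR;\cC)$. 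The first step is therefore to rephrase $\sT_Z(T^*M;\cC)$ as a Verdier quotient of microsupport-constrained sheaf categories: namely
\[
\sT_Z(T^*M;\cC)\simeq \Sh_{Z\times\bR}(M\times\bR;\cC)\big/\Sh_{Z\times\bR\times[0,\infty)}(M\times\bR;\cC),
\]
using that the quotient description of $\sT(T^*M;\cC)$ in \eqref{equations: def of Tam} is compatible with passing to the reflexive subcategory determined by a microsupport condition in the base direction $T^*M$ (the two conditions, on $T^*M$ and on $T^*\bR$, are "independent" and can be imposed separately). This step needs a small lemma: a bi-fiber sequence in $\Catdual$ remains a bi-fiber sequence after applying $\Sh_W(N;\Sp)\otimes(-)$ for a conic closed $W$, which follows because $\Sh_W(N;\Sp)\otimes(-)$ is exact (it is $(-)\otimes\Sh_W(N;\Sp)$ with a dualizable object) and preserves the relevant (co)localizations.

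Next I would run the Künneth/tensor identification. Applying $\defintion: ad-hoc def of SS$ to the manifold $M\times\bR$ and the conic closed set $Z\times\bR\subset T^*(M\times\bR)$ (and likewise to $Z\times\bR\times[0,\infty)$), we get
\[
\Sh_{Z\times\bR}(M\times\bR;\cC)\simeq \Sh_{Z\times\bR}(M\times\bR;\Sp)\otimes\cC,
\]
and similarly for the microlocal-half-space piece. Taking the Verdier quotient and using that $-\otimes\cC$ preserves bi-fiber sequences in $\Catdual$, the quotient becomes
\[
\big(\Sh_{Z\times\bR}(M\times\bR;\Sp)/\Sh_{Z\times\bR\times[0,\infty)}(M\times\bR;\Sp)\big)\otimes\cC \simeq \sT_Z(T^*M;\Sp)\otimes\cC.
\]
Now I apply the $\cC=\Sp$ case of the desired statement — i.e. $\sT_Z(T^*M;\Sp)\simeq \Sh_Z(M;\Sp)\otimes\sT_\Sp$ — which is exactly the Künneth statement for Tamarkin categories with spherical coefficients proved in \cite{Hochschild-Kuo-Shende-Zhang}. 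Combining, $\sT_Z(T^*M;\cC)\simeq \Sh_Z(M;\Sp)\otimes\sT_\Sp\otimes\cC$. Then associativity/commutativity of the Lurie tensor product gives the two asserted forms: $\Sh_Z(M;\Sp)\otimes(\sT_\Sp\otimes\cC)\simeq\Sh_Z(M;\Sp)\otimes\sT_\cC$ for the first, and $(\Sh_Z(M;\Sp)\otimes\cC)\otimes\sT_\Sp\simeq\Sh_Z(M;\cC)\otimes\sT_\Sp$ for the second, again invoking \autoref{defintion: ad-hoc def of SS} to reassemble $\Sh_Z(M;\cC)$.

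The main obstacle I anticipate is the compatibility in the first step: showing that the quotient presentation of $\sT(T^*M;\cC)$ restricts correctly to the $Z$-constrained subcategory, i.e. that imposing "$SS\subset Z$ in the $M$-directions" commutes with the Verdier quotient by $\Sh_{\cdots\times[0,\infty)}$. Concretely one must check that the localization functor $\Sh(M\times\bR;\cC)\to\Sh_{Z\times\bR}(M\times\bR;\cC)$ (left adjoint to the inclusion) sends the subcategory $\Sh_{T^*M\times\bR\times[0,\infty)}$ into $\Sh_{Z\times\bR\times[0,\infty)}$ and conversely, so that it descends to quotients; this is where the microlocal cut-off lemma is implicitly used, since the half-space projector in the $\bR$-direction does not increase microsupport in the $M$-directions. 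Everything else is formal manipulation of the symmetric monoidal structure on $\Catdual$ and repeated application of the already-proven $-\otimes\cC$ statement.
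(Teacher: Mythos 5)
Your proposal is correct and uses the same essential ingredients as the paper: the $\Sp$-coefficient K\"unneth formula for microsupport-constrained sheaf categories from \cite{Hochschild-Kuo-Shende-Zhang}, and \autoref{defintion: ad-hoc def of SS} to pass from $\Sp$ to general dualizable $\cC$. The only real difference is presentational: the paper's first step immediately writes $\sT_{Z}(T^*M;\cC)=\Sh_{Z\times \bR\times (0,\infty)}(M\times\bR; \cC)$, tacitly invoking the Tamarkin projector to identify the quotient category with a full subcategory of $\Sh(M\times\bR;\cC)$, while you instead present $\sT_Z(T^*M;\cC)$ as a Verdier quotient $\Sh_{Z\times T^*\bR}(M\times\bR;\cC)/\Sh_{Z\times\bR\times[0,\infty)}(M\times\bR;\cC)$ and correctly flag that a microlocal cut-off argument is needed to verify this restricts the ambient quotient presentation of $\sT(T^*M;\cC)$; that is indeed the non-formal content hidden in the paper's one-line step. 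Your intermediate manipulations (applying \autoref{defintion: ad-hoc def of SS} on $M\times\bR$ for the conic closed sets $Z\times T^*\bR$ and $Z\times\bR\times[0,\infty)$, using that $-\otimes\cC$ preserves bi-fiber sequences in $\Catdual$, then reassembling by associativity of the Lurie tensor product) are routine and correct. One small notational slip: in your quotient presentation the numerator should read $\Sh_{Z\times T^*\bR}(M\times\bR;\cC)$ rather than $\Sh_{Z\times\bR}(M\times\bR;\cC)$, since the $\bR$-cotangent directions must be left unconstrained.
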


\begin{proof}If $Z$ is already conic, then we have $\widehat{Z}= Z\times \bR\times (0,\infty). $ Therefore, we have
\[\sT_{Z}(T^*M;\cC)=\Sh_{Z\times \bR\times (0,\infty)}(M\times\bR; \cC).\]

Then we can apply the Künneth formula for the category of sheaves with general microsupport condition, see \cite{Hochschild-Kuo-Shende-Zhang} or \cite{Cutoff-Zhang}. In loc. cit., the Kunneth formula is proved for compactly generated rigid symmetric monoidal $\cC$. In particular, for $\cC=\Sp$. As a result, we have
\[\sT_{Z}(T^*M;\Sp)=\Sh_{Z\times \bR\times (0,\infty)}(M\times\bR; \Sp)=\Sh_{Z }(M; \Sp)\otimes \Sh_{\bR\times (0,\infty)}(\bR; \Sp)=\Sh_{Z }(M; \Sp)\otimes \sT_\Sp.\]

Then for more general coefficients $\cC$, the result is a formal consequence of \autoref{defintion: ad-hoc def of SS}.
\end{proof}

\section{Fourier-Sato-Tamarkin transform} 
In this section, we only consider $\cC=\Sp$. This restriction does not affect generality in our discussion due to \autoref{defintion: ad-hoc def of SS}.

The Fourier-Sato transform was first introduced by Sato in \cite{SKK73}. We refer to \cite[Section 3.7]{KS90} for more relevant discussion. The Fourier-Sato transform gives an equivalence between $\bR_{> 0}$-equivariant sheaves on $V$ and $V^\vee$ for a real vector space $V$. To adapt to various non-equivariant situations, one can consider some variants of the Fourier-Sato transform. We refer to \cite{FTDAgnolo,radonHonghao} for more relevant discussion on their definition and the comparison between them.

In \cite{tamarkin2013}, Tamarkin introduces a variant of the Fourier-Sato transform that induces an equivalence for sheaves that are not necessarily $\bR_{> 0}$-equivariant. We call it the Fourier-Sato-Tamarkin transform, and we explain its definition now.

We naturally identify both $T^*V$ and $T^*V^\vee$ with $V\times V^\vee$. Let $Leg(V)=\lbrace (z,\zeta,t,s): t-s+\langle z,\zeta\rangle\geq 0 \rbrace \subset V\times V^\vee\times \bR^2$. We consider
\begin{align*}
    &{\mathbb{S}}_{Leg(V)}\in \Sh(V\times V^\vee\times \bR^2;\Sp),\\
    &p_{V}: V\times V^\vee \times \bR_t\times \bR_s\rightarrow  V\times \bR_s,\\
    &p_{V^\vee}: V\times V^\vee \times \bR_t\times \bR_s\rightarrow  V^\vee\times \bR_t.
\end{align*}

\begin{Def}The Fourier-Sato-Tamarkin transform is defined as the functor
\begin{align*}
    &{\FST}:  \sT(T^*V;\Sp)  \rightarrow  \sT(T^*V^\vee;\Sp) ,\\
    &{\FST}(F)\coloneqq  p_{V^\vee!} (p_{V}^* F \otimes_{\Sp} {\bS}_{Leg(V)})[\dim V].
\end{align*}

\end{Def}
It is proved in \cite[Theorem 3.5]{tamarkin2013} that the Fourier-Sato-Tamarkin transform ${\FST}$ is an equivalence of categories.

\begin{Thm}[{\cite[Theorem 3.6]{tamarkin2013}}]\label{thm: Fourier-sato-tamarkin}The Fourier-Sato-Tamarkin transform ${\FST}$ induces the following equivalence of categories: For a closed set $X\subset V^\vee$, we have
\[\sT_{{V\times X}}(T^*V;\Sp)  \simeq    \sT_{{X\times V}}(T^*V^\vee;\Sp).\]
\end{Thm}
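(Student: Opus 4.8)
The plan is to leverage the known equivalence $\FST\colon \sT(T^*V;\Sp) \xrightarrow{\sim} \sT(T^*V^\vee;\Sp)$ from \cite[Theorem 3.5]{tamarkin2013} and show that, under this equivalence, the microsupport condition $SS_+ \subset \widehat{V\times X}$ is transformed into the condition $SS_+ \subset \widehat{X\times V}$. Since $\FST$ is already an equivalence on the ambient categories, it suffices to check that it restricts to an equivalence between the full subcategories, and because $\FST^{-1}$ is of the same type (the inverse Fourier-Sato-Tamarkin transform, given by an analogous kernel with $s$ and $t$ roles swapped and $Leg(V^\vee)$ replacing $Leg(V)$), it is enough to establish one inclusion of essential images: if $SS_+([F]) \subset \widehat{V\times X}$ then $SS_+(\FST([F])) \subset \widehat{X\times V}$.

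The key step is therefore a microsupport estimate for the integral-transform functor $\FST(F) = p_{V^\vee!}(p_V^* F \otimes_\Sp \bS_{Leg(V)})[\dim V]$. First I would record $SS(\bS_{Leg(V)})$ inside $T^*(V\times V^\vee\times \bR^2)$: since $Leg(V) = \{t - s + \langle z,\zeta\rangle \geq 0\}$ is a half-space with smooth boundary hypersurface $\{t-s+\langle z,\zeta\rangle = 0\}$, its microsupport is the zero section union the outward conormal ray bundle of that boundary, which is the conic set $\{(z,\zeta,t,s\,;\,\tau\zeta, \tau z, \tau, -\tau) : \tau \geq 0, \ t - s + \langle z,\zeta\rangle = 0\}$ together with the zero section. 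Then I would apply the standard Kashiwara–Schapira microsupport bounds (\cite[\S5.4]{KS90}, valid for $\Sp$-coefficients and hence here by \autoref{defintion: ad-hoc def of SS}): the bound for $p_V^*$ (pullback: $SS(p_V^* F) \subset p_V^\pi{}^{-1}({}^t p_V' SS(F))$), the bound for $\otimes_\Sp$ (sum of microsupports, using that the relevant intersection is contained in the zero section so the product estimate holds without properness issues locally), and the bound for $p_{V^\vee !}$ (proper pushforward: $SS(p_{V^\vee!}G) \subset \{(\text{image covectors}) : \exists (z,\zeta) \text{ with vertical covector } 0\}$), together with the non-properness of $p_{V^\vee}$ handled exactly as in Tamarkin's original argument — this is where the half-space geometry of $Leg(V)$ is essential, as it forces the fiberwise supports to be bounded in the direction where $p_{V^\vee}$ fails to be proper once we restrict to $\{\tau > 0\}$. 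Chasing the covectors through this composition, a point of $SS_+(\FST(F))$ over $(\zeta_0, z_0, t_0, \tau_0)$ with $\tau_0 > 0$ forces the existence of a point of $SS_+(F)$ of the form $(z_0, \tau_0\zeta_0, s_0, \tau_0)$; that is, $(z_0, \zeta_0) \in V\times X$ means precisely $(z_0, \tau_0\zeta_0/\tau_0) \in V\times X$, i.e. the source covector lies in $\widehat{V\times X}$, and the target covector $(\zeta_0, z_0/\tau_0\cdot\tau_0)$ — after the identification $T^*V^\vee \cong V^\vee\times V$ sending $(\zeta,z)$ — lies in $\widehat{X\times V}$ iff $(\zeta_0, z_0/\tau_0)\in X\times V$, which holds automatically on the $V$-factor and holds on the $X$-factor because $\zeta_0$ was constrained by $(z_0,\zeta_0)\in V\times X$. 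I would make this bookkeeping precise by working on the conification (the $\bR_{>0}$-equivariant picture where $\FST$ becomes the genuine Fourier–Sato transform of \cite[\S3.7]{KS90}), where the microsupport transformation law for the Fourier–Sato transform — namely that it swaps $T^*V \cong V\times V^\vee$ with $T^*V^\vee \cong V^\vee \times V$ by the flip $(z,\zeta)\mapsto(\zeta,-z)$ or $(\zeta,z)$ depending on convention — is already recorded in \cite[Proposition 3.7.12]{KS90}.

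Concretely, the cleanest route is: (i) reduce to $\cC = \Sp$ by \autoref{defintion: ad-hoc def of SS}, as the section already states; (ii) pass from Tamarkin's category to the $\bR_{>0}$-conic picture on $V\times\bR$ resp. $V^\vee\times\bR$, where $SS_+$ becomes an honest conic microsupport and $\FST$ restricted to $\{\tau>0\}$ is intertwined with the ordinary Fourier–Sato transform on the $(z,t)$ resp. $(\zeta,t)$ variables relative to the kernel $\bS_{\{t-s+\langle z,\zeta\rangle\geq 0\}}$; (iii) invoke the Fourier–Sato microsupport identity $SS(F^\wedge)$ is the image of $SS(F)$ under the canonical identification $T^*V \cong T^*V^\vee$, from \cite[\S3.7]{KS90}, to translate $\widehat{V\times X}$ into $\widehat{X\times V}$; (iv) conclude that $\FST$ carries $\sT_{V\times X}(T^*V;\Sp)$ into $\sT_{X\times V}(T^*V^\vee;\Sp)$, and by symmetry (applying the same to $\FST^{-1}$) that this is an equivalence. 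The main obstacle I anticipate is step (ii)–(iii): making the dictionary between Tamarkin's $\FST$ and the classical Fourier–Sato transform precise enough that the microsupport transformation law transfers cleanly, in particular correctly tracking the affine shift by $\langle z,\zeta\rangle$ in $Leg(V)$ and the $[\dim V]$ shift, and ensuring the non-properness of $p_{V^\vee}$ genuinely causes no loss in the microsupport estimate on the locus $\{\tau>0\}$ — this is exactly the point where one implicitly re-proves (a version of) the microlocal cut-off lemma, as noted in the second Remark of the introduction.
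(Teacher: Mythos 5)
The paper supplies no proof of this theorem beyond the sentence immediately preceding it: the whole statement, including the transformation of microsupport constraints, is cited from \cite[Theorem~3.5]{tamarkin2013}. So there is no in-paper argument to compare against; what you have written is a sketch of how one would re-derive Tamarkin's result.

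Your overall strategy --- take the ambient equivalence for granted, then show via microsupport estimates on the kernel transform that $\widehat{V\times X}$ is carried to $\widehat{X\times V}$, and conclude by symmetry of $\FST$ and $\FST^{-1}$ --- is the right one, and it is indeed what Tamarkin's proof does. The gap is at the step you yourself flag. The microsupport bound for $p_{V^\vee!}$ in \cite[\S5.4]{KS90} requires $p_{V^\vee}$ to be proper on the support of $p_V^*F\otimes \bS_{Leg(V)}$, and it is not: the boundary hypersurface of $Leg(V)$ is unbounded in the fiber directions. Saying the ``half-space geometry of $Leg(V)$ forces the fiberwise supports to be bounded once we restrict to $\{\tau>0\}$'' is precisely the content that has to be proved, and it is not a softening of the hypotheses of the pushforward estimate but a separate non-characteristic deformation argument (this is exactly the microlocal cut-off lemma that the introduction's second remark says appears implicitly in this route). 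Until that step is carried out, the inclusion of essential images is not established. Your alternative route (ii)--(iii) via conification to the classical Fourier--Sato transform would also work, but it too is only sketched, and the dictionary between Tamarkin's kernel with the affine term $\langle z,\zeta\rangle$ and the $\bR_{>0}$-equivariant Fourier--Sato transform of \cite[\S3.7]{KS90} is not spelled out. There is also a sign slip worth fixing: for $Leg(V)=\{g\geq 0\}$ with $g=t-s+\langle z,\zeta\rangle$, the non-zero part of $\SS(\bS_{Leg(V)})$ is $\{(x,\lambda\,dg(x)):\lambda\leq 0,\ g(x)=0\}$, i.e.\ covectors proportional to $(-\zeta,-z,-1,1)$, not to $(\zeta,z,1,-1)$; with the paper's convention that the Tamarkin category sits in $\{\tau>0\}$ after quotienting by $\Sh_{\bR\times[0,\infty)}$, one has to track this sign carefully or the estimate lands in the wrong half of the cotangent bundle.

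In short: correct plan, consistent with the cited source, but the one technically hard point (non-properness of $p_{V^\vee}$ and the implicit cut-off) is acknowledged rather than closed, so as written this is an outline rather than a proof.
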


This yields the following result
\begin{Prop}\label{prop: kunneth2}Let $V$ be a finite dimensional real vector space and $X\subset V^\vee$ be a closed set. We have 
\[\sT_{{V\times X}}(T^*V;\Sp)   \simeq  \Sh(X;\sT_\Sp).\]    
\end{Prop}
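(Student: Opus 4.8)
The plan is to use the Fourier--Sato--Tamarkin equivalence of \autoref{thm: Fourier-sato-tamarkin} to replace the microsupport constraint $V\times X$ by the constraint $X\times V$, and then to recognize the resulting Tamarkin category with a ``product'' microsupport condition as a sheaf category via the Künneth-type decomposition already established. Concretely, \autoref{thm: Fourier-sato-tamarkin} gives
\[\sT_{V\times X}(T^*V;\Sp)\simeq \sT_{X\times V}(T^*V^\vee;\Sp).\]
The point of swapping the factors is that now the \emph{closed} set appears on the base $V^\vee$ rather than in the fiber direction, and moreover it is the \emph{full} cotangent directions $V$ that are allowed over it; this is exactly the kind of condition that should cut out sheaves supported (with no further microlocal restriction) on $X$.

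The key step is therefore to identify $\sT_{X\times V}(T^*V^\vee;\Sp)$ with $\Sh(X;\sT_\Sp)$. First I would recall that, unwinding the definition of $\widehat{Z}$ for $Z=X\times V\subset T^*V^\vee=V^\vee\times V$, the condition $SS_+([F])\subset \widehat{X\times V}$ means exactly that the underlying sheaf on $V^\vee\times\bR$ has microsupport, in the $\{\tau>0\}$ region, contained in $\{(z,\zeta,t,\tau): z\in X\}$ — i.e. it only constrains the base point to lie over $X$ and imposes no restriction on the covector $\zeta$ in the $V^\vee$-directions. Such a condition is precisely the statement that $[F]$, as an object of $\Sh(V^\vee;\sT_\Sp)$ (using $\sT(T^*V^\vee;\Sp)\simeq \Sh(V^\vee;\sT_\Sp)$ from \eqref{equations: def of Tam}), is supported on the closed set $X\subset V^\vee$. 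Then I would invoke the recollement for sheaves on a space along a closed subset: for a closed inclusion $i\colon X\hookrightarrow V^\vee$ with open complement $j\colon U\hookrightarrow V^\vee$, the pushforward $i_*$ identifies $\Sh(X;\sT_\Sp)$ with the full subcategory of $\Sh(V^\vee;\sT_\Sp)$ of objects $G$ with $j^*G\simeq 0$, i.e. those supported on $X$; and ``supported on $X$'' is exactly the condition $SS_+\subset \widehat{X\times V}$ because $SS_+([F])\cap(U\times V\times\bR\times(0,\infty))=\emptyset$ forces $j^*[F]\simeq 0$ and conversely. (Here one uses that microsupport over a base point only constrains which base points occur, once \emph{all} fiber directions are permitted; this is a standard fact for $\Sp$-coefficients, hence valid in general by \autoref{defintion: ad-hoc def of SS}.)

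Assembling: combine the chain of equivalences
\[\sT_{V\times X}(T^*V;\Sp)\;\xrightarrow[\ \simeq\ ]{\FST}\;\sT_{X\times V}(T^*V^\vee;\Sp)\;\simeq\;\Sh_X(V^\vee;\sT_\Sp)\;\xrightarrow[\ \simeq\ ]{i^*}\;\Sh(X;\sT_\Sp),\]
where the middle identification is the ``support over $X$'' reinterpretation just described, and the last is $i^*$ (inverse to $i_*$) for the closed inclusion $i\colon X\hookrightarrow V^\vee$. Strictly speaking one should note that $\sT_\Sp$ is dualizable stable, so $\Sh(X;\sT_\Sp)$ makes sense and all the tensoring-up arguments of \autoref{defintion: ad-hoc def of SS} apply; alternatively one can run the whole argument at the level of $\Sp$-coefficients with the extra $\bR$-factor and quotient by $\Sh_{\cdots\times[0,\infty)}$ at the end, matching the definition of $\sT(T^*V^\vee;\Sp)$ in \eqref{equations: def of Tam}.

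The main obstacle I anticipate is the middle step: carefully justifying that the microsupport condition $SS_+([F])\subset\widehat{X\times V}$ is \emph{equivalent} to $[F]$ being supported on $X\subset V^\vee$ (as an object of $\Sh(V^\vee;\sT_\Sp)$), and not merely implied by it. One direction — support on $X$ implies the microsupport estimate — is immediate from the support bound on microsupport. The converse requires knowing that if the microsupport (in the $\tau>0$ region) projects into $X$ then the sheaf genuinely vanishes on the open complement; over a point where the sheaf is nonzero but all cotangent directions are \emph{a priori} allowed, this is the statement that the full cotangent fiber $\{q\}\times V^\vee$ being in $Z$ is automatic whenever $q\in\supp$, so the only real content is $\supp([F])\subset X\times\bR$, which is part of the definition of $SS$ via $SS(F)\supset\supp(F)\times 0$. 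So the ``obstacle'' is mostly bookkeeping about how $\widehat{(\cdot)}$, $SS_+$, $\supp$, and the recollement interact; once that is pinned down the proof is a short concatenation of known equivalences.
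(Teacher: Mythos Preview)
Your proposal is correct and follows essentially the same route as the paper: apply $\FST$, then recognize the condition $SS_+\subset\widehat{X\times V}$ as a support condition on $V^\vee$. The only organizational difference is that the paper invokes \autoref{prop: kunneth} (noting $X\times V\subset T^*V^\vee$ is conic) to factor $\sT_{X\times V}(T^*V^\vee;\Sp)\simeq\Sh_{X\times V}(V^\vee;\Sp)\otimes\sT_\Sp$ and then identifies $\Sh_{X\times V}(V^\vee;\Sp)\simeq\Sh(X;\Sp)$ via $\pi(\SS(H))=\supp(H)$ at the $\Sp$-level; you instead argue the support identification directly with $\sT_\Sp$-coefficients. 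One small wobble: your line ``which is part of the definition of $SS$ via $SS(F)\supset\supp(F)\times 0$'' does not actually justify the converse, since $SS_+$ lives entirely in $\{\tau>0\}$ and never sees the zero section---the correct justification is the one you give first, that $SS_+$ empty over $U$ forces $j^*[F]\simeq 0$ in the Tamarkin quotient, which is exactly what \autoref{prop: kunneth} packages.
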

\begin{proof}By \autoref{thm: Fourier-sato-tamarkin}, we have $\sT_{{V\times X}}(T^*V;\Sp)  \simeq    \sT_{{X\times V}}(T^*V^\vee;\Sp)$. We notice that $V$ is a conic closed set in $V=(V^\vee)^\vee$. Then $\sT_{{X\times V}}(T^*V^\vee;\Sp)\simeq \Sh_{X\times V}( V^\vee;\Sp)\otimes \sT_\Sp$ by \autoref{prop: kunneth}.

Lastly, recall that $\Sh_{X\times V}( V^\vee;\Sp)$ consists of sheaves $H\in \Sh(V^\vee;\Sp)$ with the usual microsupport bound $\SS(H)\subset X\times V$. We notice that $\pi(\SS(H))=\supp(H)$ for the cotangent projection $\pi$. Henceforth, we have $\SS(H)\subset X\times V$ if and only if $H$ is supported in $X$. In particular, it shows that $\Sh_{X\times V}( V^\vee;\Sp)\simeq \Sh( X;\Sp)$.
\end{proof}

\begin{RMK}For $V=\bR$ and $X=(-\infty,0]$. As a consequence of \autoref{prop: kunneth2}, we can pass to the quotient of $\Sh(\bR;\sT_\Sp)$ by $  \sT_{{\bR\times (-\infty,0]}}(T^*V;\Sp)  \simeq  \Sh((-\infty,0];\sT_\Sp)$ to see that
\[\sT_\Sp\otimes \sT_\Sp\simeq \Sh((0,\infty);\sT_\Sp).\]

This is \cite[Proposition 6.1]{WildBetti}, and the proof here is a precise form of \cite[Warning 6.2]{WildBetti} therein.
\end{RMK}

\section{Localizing invariants}
Let $\cE$ be an accessible stable category. Recall that a functor $F:\Catdual\to\cE$ is a (continuous) localizing invariant if the following conditions hold:

\begin{enumerate}[label=(\roman*),ref=(\roman*)]
\item $F(0)=0;$ 

\item for any bi-fiber sequence of the form
\[\cA \rightarrow \cB \rightarrow\cC\]
in $\Catdual$, the sequence
\[F(\cA) \rightarrow F(\cB) \rightarrow F(\cC)\]
is a fiber sequence in $\cE$.
\end{enumerate}

Roughly speaking, the results of \cite{Efimov-K-theory} assert that a localizing invariant is determined by its value on compactly generated stable categories, and all localizing invariants come in this way. 

\begin{RMK}In general, we should be careful about the accessibility of localizing invariants. Here, we will only discuss finitary localizing invariants, i.e. those commute with $\omega$-filtered colimits.    
\end{RMK}

Among all finitary localizing invariants, there exists a universal one, which was originally studied by \cite{universal-hihger-K} on idempotent complete stable (small) categories, or equivalently compactly generated stable categories, and was later extended to dualizable stable categories by \cite{Efimov-K-theory}. 

More precisely, there exists an $\omega$-accessible stable category $\Motloc$ that is called the category of non-commutative motives and a universal finitary localizing invariant $\Uloc:\Catdual\rightarrow \Motloc$ which is initial among all finitary localizing invariants:
\[  \Fun^L(\Motloc,\cE )\simeq \Fun_{\textnormal{loc},\omega}(\Catdual,\cE ),\quad G\mapsto F=G\circ\Uloc .\]

Therefore, many properties of $\Uloc$ are automatically shared by all finitary localizing invariants.

The following standard observation follows directly from the definition.
\begin{Lemma}\label{lemma: trivial lemma}For a localizing invariant $F: \Catdual \rightarrow \cE$ and a dualizable stable category $\cC$, we have $F(\cC\otimes -):\Catdual \rightarrow \cE$ is a localizing invariant.    
\end{Lemma}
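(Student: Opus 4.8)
The plan is to verify the two defining conditions of a localizing invariant for the functor $F(\cC\otimes -)$. This is essentially a matter of checking that tensoring with a fixed dualizable stable category $\cC$ is compatible with the structure that $F$ is defined to respect, and then composing with $F$.

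\textbf{Step 1: The zero condition.} First I would observe that the functor $\cC\otimes -: \Catdual \to \Catdual$ sends $0$ to $0$, since $\cC\otimes 0 \simeq 0$ in $\Catdual$ (the Lurie tensor product with the zero category is zero). Hence $F(\cC\otimes 0) \simeq F(0) \simeq 0$ by property (i) for $F$.

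\textbf{Step 2: Exactness.} The key point is that for a fixed dualizable stable category $\cC$, the functor $\cC\otimes -$ preserves bi-fiber sequences in $\Catdual$. This follows because $\cC\otimes -$ is both a left and a right adjoint at the level of the relevant (co)localization data — more concretely, $\cC$ being dualizable means $\cC\otimes -$ commutes with all (co)limits in $\Catdual$ and in particular carries a bi-fiber sequence $\cA \to \cB \to \cD$ to the bi-fiber sequence $\cC\otimes\cA \to \cC\otimes\cB \to \cC\otimes\cD$. (This is a standard fact about the symmetric monoidal structure on $\Catdual$; one can cite the relevant results of \cite{Efimov-K-theory} on the monoidal structure.) Applying $F$ to the resulting bi-fiber sequence and using property (ii) for $F$ gives that $F(\cC\otimes\cA) \to F(\cC\otimes\cB) \to F(\cC\otimes\cD)$ is a fiber sequence in $\cE$. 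Combining Steps 1 and 2 shows $F(\cC\otimes -)$ is a localizing invariant.

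\textbf{Main obstacle.} The only substantive point is the claim in Step 2 that $\cC\otimes -$ sends bi-fiber sequences to bi-fiber sequences; everything else is formal. This is where one needs the hypothesis that $\cC$ is dualizable: a bi-fiber sequence in $\Catdual$ is simultaneously a fiber and a cofiber sequence (equivalently, a split Verdier sequence / localization sequence), and one must know that tensoring preserves this. I expect this to be genuinely routine given the cited framework — it is recorded in \cite{Efimov-K-theory} that $\Catdual$ is a symmetric monoidal category in which the tensor product preserves such sequences — so the ``proof'' amounts to one sentence citing that fact and one sentence invoking the definition of a localizing invariant. I would keep the written proof to essentially that length, since the lemma is explicitly flagged as a standard observation.
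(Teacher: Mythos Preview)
Your proposal is correct and matches the paper's approach: the paper simply records this as ``a standard observation [that] follows directly from the definition'' and gives no further argument, so your two-step verification is exactly the intended content. One small sharpening: in Step 2 you attribute the preservation of bi-fiber sequences to dualizability of $\cC$, but in fact the Lurie tensor on $\PrSt$ already commutes with colimits in each variable, and the point is rather that in $\Catdual$ short exact sequences are preserved under tensoring (this is recorded in \cite{Efimov-K-theory}); your citation covers this, so the written proof can indeed be the one-sentence version you describe.
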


Now, we will discuss localizing invariants of $\Sh_{V\times X}(V;{\cC})\simeq \Sh_{V\times X}(V;{\Sp})\otimes \cC$. Then by \autoref{lemma: trivial lemma}, we may assume $\cC=\Sp$ in the following proofs.

To start with, we present a proof of the 1-dimension \autoref{Theorem: Efimov with support} based on \autoref{Theorem: Efimov no support}. The idea is to reverse the process of the original proof presented in \cite{Efimov2022ICM}. 

\begin{Prop}\label{prop: 1.2 from 1.1}We have $\Uloc (\Sh_{\bR\times [0,\infty)}(\bR;\cC)) \simeq 0$ and $\Uloc ({\sT_\cC}) \simeq \Omega\Uloc ({\cC})$.    
\end{Prop}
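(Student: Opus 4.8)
The plan is to deduce both statements from \autoref{Theorem: Efimov no support} together with the Fourier--Sato--Tamarkin identifications of \autoref{prop: kunneth2}, reducing everything to the case $\cC = \Sp$ by \autoref{lemma: trivial lemma} as already noted. First I would handle the Tamarkin category. Applying \autoref{prop: kunneth2} with $V = \bR$ and $X = (-\infty,0]$ gives $\sT_{\bR\times(-\infty,0]}(T^*\bR;\Sp) \simeq \Sh((-\infty,0];\sT_\Sp)$; since $(-\infty,0]$ is a contractible locally compact Hausdorff space, \autoref{Theorem: Efimov no support} (with the constant presheaf $\underline{\sT_\Sp}$) computes $\Uloc$ of this category as $\Gamma_c((-\infty,0], (\Uloc\sT_\Sp)^\sharp)$. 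The cohomology-with-compact-support of a half-open interval vanishes, so this $\Uloc$ is zero. I would similarly get $\Uloc(\Sh((0,\infty);\sT_\Sp)) \simeq 0$ since $(0,\infty) \cong \bR$ has vanishing compactly supported cohomology in all degrees except shifted by $1$ — wait, actually $\Gamma_c(\bR,-) \simeq (-)[-1]$ is \emph{not} zero. Let me instead route through the open/closed decomposition on $\bR$ directly.

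The cleaner route: consider the recollement (bi-fiber sequence) in $\Catdual$ associated to the closed inclusion $\{0\} \hookrightarrow \bR$ in the Tamarkin setting, or more simply use the definition $\sT_\cC = \Sh(\bR;\cC)/\Sh_{\bR\times[0,\infty)}(\bR;\cC)$ directly once we know $\Uloc(\Sh_{\bR\times[0,\infty)}(\bR;\cC)) \simeq 0$. So the first task is the vanishing $\Uloc(\Sh_{\bR\times[0,\infty)}(\bR;\Sp)) \simeq 0$. By the analogue of \autoref{prop: kunneth2} — or by observing that $\Sh_{\bR\times[0,\infty)}(\bR;\Sp)$ is, up to the Fourier--Sato transform, the category $\Sh([0,\infty);\Sp)$ (sheaves supported on the closed half-line $[0,\infty)\subset\bR^\vee$) — we identify it with a sheaf category on a contractible, but crucially \emph{one-point-compactifiable}, space. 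Actually $\Sh_{\bR\times[0,\infty)}(\bR;\Sp)$ identifies with $\Sh([0,\infty);\Sp)$ via the ordinary Fourier--Sato transform (the conic case: $[0,\infty)$ is a closed cone in $\bR^\vee$, and $\Sh_{\bR\times[0,\infty)}(\bR;\Sp) \simeq \Sh_{[0,\infty)\times\bR}(\bR^\vee;\Sp) \simeq \Sh([0,\infty);\Sp)$ by the argument in the proof of \autoref{prop: kunneth2}). Then \autoref{Theorem: Efimov no support} gives $\Uloc(\Sh([0,\infty);\Sp)) \simeq \Gamma_c([0,\infty), \Uloc(\Sp)^\sharp)$, and $\Gamma_c([0,\infty),-) \simeq 0$ because $[0,\infty)$ is contractible with non-compact, contractible one-point compactification (equivalently, $H^*_c([0,\infty);\bZ) = 0$). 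This establishes the first claim.

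For the second claim, I would apply $\Uloc$ to the bi-fiber sequence $\Sh_{\bR\times[0,\infty)}(\bR;\cC) \to \Sh(\bR;\cC) \to \sT_\cC$. By \autoref{lemma: trivial lemma} reduce to $\cC = \Sp$, use the first claim that the left term has $\Uloc \simeq 0$, and compute $\Uloc(\Sh(\bR;\Sp)) \simeq \Gamma_c(\bR, \Uloc(\Sp)^\sharp) \simeq \Omega\,\Uloc(\Sp)$ by \autoref{Theorem: Efimov no support}, since $\Gamma_c(\bR,-) \simeq \Omega(-)$ (cohomology with compact support of the real line is a shift by $-1$). The fiber sequence $0 \to \Omega\Uloc(\Sp) \to \Uloc(\sT_\Sp)$ then forces $\Uloc(\sT_\Sp) \simeq \Omega\Uloc(\Sp)$, and tensoring back up with $\cC$ via \autoref{lemma: trivial lemma} and $\sT_\cC \simeq \sT_\Sp\otimes\cC$ gives the general statement. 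The main obstacle I anticipate is verifying carefully that the Fourier--Sato identification of $\Sh_{\bR\times[0,\infty)}(\bR;\Sp)$ with $\Sh([0,\infty);\Sp)$ is legitimate — one must confirm the conic Fourier--Sato transform of \autoref{thm: Fourier-sato-tamarkin}/\autoref{prop: kunneth} specializes correctly to the closed half-line and that the resulting space is literally $[0,\infty)$ with its standard topology (so that \autoref{Theorem: Efimov no support} applies and the $\Gamma_c$ vanishing is the genuine compactly-supported cohomology computation); the rest is a routine chase of bi-fiber sequences and the elementary computations $\Gamma_c([0,\infty),-)\simeq 0$ and $\Gamma_c(\bR,-)\simeq\Omega(-)$.
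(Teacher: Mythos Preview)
Your second step (deducing $\Uloc(\sT_\cC)\simeq\Omega\Uloc(\cC)$ from the vanishing via the defining bi-fiber sequence and \autoref{Theorem: Efimov no support} applied to $\bR$) is fine and matches the paper's logic. The gap is in your first step: the claimed equivalence $\Sh_{\bR\times[0,\infty)}(\bR;\Sp)\simeq\Sh([0,\infty);\Sp)$ is not available. The ``ordinary'' Fourier--Sato transform of \cite{KS90} is only an equivalence on $\bR_{>0}$-equivariant (conic) sheaves on $V$, and a general object of $\Sh_{\bR\times[0,\infty)}(\bR;\Sp)$ is not conic (e.g.\ $\bS_{[a,b)}$). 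The transform that does work for arbitrary sheaves is precisely the Fourier--Sato--Tamarkin transform of \autoref{thm: Fourier-sato-tamarkin}, but that lives on the Tamarkin level $\sT(T^*V)$, not on $\Sh(V)$; the passage back from Tamarkin to $\Sh$ in \autoref{Theorem: main thm}(2) is exactly what \emph{uses} \autoref{prop: 1.2 from 1.1}, so invoking it here would be circular. In fact the microlocal cut-off lemma identifies $\Sh_{\bR\times[0,\infty)}(\bR;\Sp)$ with sheaves on $\bR$ for the $\gamma$-topology, which is non-Hausdorff, so \autoref{Theorem: Efimov no support} does not even apply to it directly --- this is precisely the obstruction the paper is designed to work around.

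The paper's proof avoids any Fourier transform at this stage. It instead assembles the Cartesian square
\[
\begin{tikzcd}
\Sh(\bR;\Sp)\arrow[r]\arrow[d] & \Sh_{\bR\times[0,\infty)}(\bR;\Sp)\arrow[d]\\
\Sh_{\bR\times(-\infty,0]}(\bR;\Sp)\arrow[r] & \Sh_{\bR\times\{0\}}(\bR;\Sp)
\end{tikzcd}
\]
in $\Catdual$, applies \cite[Proposition 4.11]{Efimov-K-theory} to obtain a fiber sequence $\Uloc(\Sh_{\bR\times[0,\infty)})^{\oplus 2}\to\Uloc(\Sh_{\bR\times\{0\}})\to\Sigma\Uloc(\Sh(\bR))$, and then checks that the second map is the loop--suspension equivalence $\Uloc(\Sp)\xrightarrow{\sim}\Sigma\Omega\Uloc(\Sp)$ (both outer terms being computed by \autoref{Theorem: Efimov no support} on the Hausdorff spaces $\mathrm{pt}$ and $\bR$). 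This forces the first term to vanish without ever needing a sheaf-theoretic model for $\Sh_{\bR\times[0,\infty)}(\bR;\Sp)$ over a Hausdorff space.
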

\begin{proof}One can check that we have the following Cartesian square of dualizable stable categories. See also \cite[Theorem 6.16]{APT-Tatsuki-Zhang} for a detailed proof of its generalization.
\[\begin{tikzcd}
\Sh(\bR;\Sp) \arrow[d] \arrow[r]                 & {\Sh_{\bR\times [0,\infty)}(\bR;\Sp)} \arrow[d] \\
{\Sh_{\bR\times (-\infty,0]}(\bR;\Sp)} \arrow[r] & \Sh_{\bR\times \{0\}}(\bR;\Sp) .                
\end{tikzcd}\]

The map $x\mapsto -x$ identifies ${\Sh_{\bR\times [0,\infty)}(\bR;\Sp)}\simeq {\Sh_{\bR\times (-\infty,0]}(\bR;\Sp)} $. Therefore, by \cite[Proposition 4.11]{Efimov-K-theory}, we have the fiber sequence in $\Motloc$:
\[  \Uloc (\Sh_{\bR\times [0,\infty)}(\bR;\Sp))^{\oplus 2}\rightarrow               \Uloc ( \Sh_{\bR\times \{0\}}(\bR;\Sp))\rightarrow \Sigma  \Uloc (\Sh(\bR;\Sp)). \]

One can directly check that the second morphism is induced by the loop-suspension adjunction of $\Uloc ( \Sp)$, which yields an equivalence. Then we have $\Uloc (\Sh_{\bR\times [0,\infty)}(\bR;\Sp)) $ vanishes.

Next, we consider the bi-fiber sequence
\[\sT_\Sp\rightarrow \Sh_{\bR\times [0,\infty)}(\bR;\Sp)\rightarrow \Sp.\]

Then the second statement follows directly from the fact that $\Uloc$ is a localizing invariant and the first equivalence.
\end{proof}

Now, we can state our theorems.

\begin{Thm}\label{Theorem: main thm}Let $V$ be a finite dimensional real vector space and let $X\subset V^\vee$ be a closed set, and $\cC$ be a dualizable stable category.
\begin{enumerate}[fullwidth]
        \item We have the equivalence:\[\Uloc (\sT_{{V\times X}}(T^*V;\cC)) \simeq \Gamma_c(X; \Uloc({\sT_\cC}) )\simeq \Omega\Gamma_c(X; \Uloc({\cC}) );\]
        \item If $X$ is in addition conic, then we have
        \[\Uloc (\Sh_{V\times X}(V;{\cC})) \simeq \Gamma_c(X; \Uloc({\cC}) ).\]
\end{enumerate}  
In particular, the same statements hold for any finitary localizing invariant in place of $\Uloc$.
\end{Thm}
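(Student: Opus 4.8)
The plan is to deduce part (1) from \autoref{Theorem: Efimov no support} via the Fourier-Sato-Tamarkin transform, and then to bootstrap part (2) from part (1) by a quotient argument. By \autoref{lemma: trivial lemma} it suffices to treat $\cC = \Sp$ throughout, since all the categories in sight arise by tensoring the $\Sp$-version with $\cC$ (using \autoref{defintion: ad-hoc def of SS} and the Künneth identifications), and $\Uloc(-\otimes\cC)$ is again a finitary localizing invariant; the final sentence about arbitrary finitary localizing invariants is then automatic from the universal property of $\Uloc$.

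For part (1), I would first invoke \autoref{prop: kunneth2} to identify $\sT_{V\times X}(T^*V;\Sp) \simeq \Sh(X;\sT_\Sp)$. Now $\sT_\Sp$ is a dualizable stable category, so it defines a constant presheaf on $X$ with values in $\Catdual$, and \autoref{Theorem: Efimov no support} applies directly: $\Uloc(\Sh(X;\sT_\Sp)) \simeq \Gamma_c(X, (\Uloc \underline{\sT_\Sp})^\sharp) \simeq \Gamma_c(X; \Uloc(\sT_\Sp))$, where the constant presheaf of motives $\underline{\Uloc(\sT_\Sp)}$ already equals its own sheafification $(\cdot)^\sharp$ because it is constant. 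This gives the first equivalence $\Uloc(\sT_{V\times X}(T^*V;\cC)) \simeq \Gamma_c(X;\Uloc(\sT_\cC))$. The second equivalence $\Gamma_c(X;\Uloc(\sT_\cC)) \simeq \Omega\Gamma_c(X;\Uloc(\cC))$ then follows from \autoref{prop: 1.2 from 1.1}, which gives $\Uloc(\sT_\cC) \simeq \Omega\Uloc(\cC)$, together with the fact that $\Gamma_c(X; -)$ is an exact functor of the coefficient motive and hence commutes with the loop functor $\Omega$.

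For part (2), assume $X$ is conic. Then by \autoref{prop: kunneth} we have $\sT_{V\times X}(T^*V;\Sp) \simeq \Sh_{V\times X}(V;\Sp)\otimes\sT_\Sp$, and more to the point there is a bi-fiber sequence in $\Catdual$ of the form
\[
\Sh_{V\times X}(V;\Sp)\otimes\sT_\Sp \rightarrow \Sh_{V\times X}(V;\Sp)\otimes\Sh_{\bR\times[0,\infty)}(\bR;\Sp) \rightarrow \Sh_{V\times X}(V;\Sp),
\]
obtained by tensoring the defining bi-fiber sequence $\sT_\Sp \to \Sh_{\bR\times[0,\infty)}(\bR;\Sp)\to \Sp$ with the dualizable stable category $\Sh_{V\times X}(V;\Sp)$ (tensoring with a dualizable stable category preserves bi-fiber sequences in $\Catdual$). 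Applying the localizing invariant $\Uloc$ and using \autoref{prop: 1.2 from 1.1} — more precisely \autoref{lemma: trivial lemma} applied to $\Uloc(\Sh_{V\times X}(V;\Sp)\otimes -)$, which kills $\Sh_{\bR\times[0,\infty)}(\bR;\Sp)$ since $\Uloc(\Sh_{\bR\times[0,\infty)}(\bR;\Sp))\simeq 0$ implies the middle term vanishes — we get $\Uloc(\sT_{V\times X}(T^*V;\Sp)) \simeq \Omega\Uloc(\Sh_{V\times X}(V;\Sp))$. Combining with part (1), which in the conic case reads $\Uloc(\sT_{V\times X}(T^*V;\Sp)) \simeq \Omega\Gamma_c(X;\Uloc(\Sp))$, and then de-looping (both sides are values of the exact functor $\Omega$ on objects of $\Motloc$, so we may cancel $\Omega$ after checking the equivalence is induced compatibly — or simply note the relevant fiber sequences identify the de-looped objects), we obtain $\Uloc(\Sh_{V\times X}(V;\Sp)) \simeq \Gamma_c(X;\Uloc(\Sp))$, which tensored back up with $\cC$ is the claim.

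The main obstacle I anticipate is the bookkeeping around $\Omega$: one must make sure the de-looping step in part (2) is legitimate, i.e. that the equivalence $\Uloc(\sT_{V\times X}) \simeq \Omega(\text{something})$ produced by the bi-fiber sequence and the one produced by part (1) are compatible enough to cancel the loop, rather than merely asserting that two objects which become equivalent after applying $\Omega$ are themselves equivalent (which is false in general in a stable category — but here both de-loopings are forced by honest fiber sequences with a common third term, which resolves it). A secondary point requiring care is checking that the constant presheaf $\underline{\sT_\Sp}$ genuinely satisfies the hypotheses of \autoref{Theorem: Efimov no support} on the (merely locally compact Hausdorff, possibly non-compact) space $X \subset V^\vee$, and that $\Gamma_c$ there matches the $\Gamma_c$ in our statement; this should be immediate from the constancy of the presheaf but is worth a sentence.
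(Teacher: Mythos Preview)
Your proposal is correct and follows essentially the same route as the paper: part (1) via \autoref{prop: kunneth2} and \autoref{Theorem: Efimov no support} plus \autoref{prop: 1.2 from 1.1}, and part (2) via \autoref{prop: kunneth} together with the vanishing of $\Uloc$ on $\Sh_{\bR\times[0,\infty)}(\bR;\Sp)$ tensored up. Your self-flagged worry about de-looping is a non-issue---the paper simply writes $\Uloc(\Sh_{V\times X}(V;\cC)) \simeq \Sigma\,\Uloc(\Sh_{V\times X}(V;\cC)\otimes\sT_\Sp)$ and then substitutes part (1), using that $\Sigma\Omega \simeq \id$ in the stable category $\Motloc$, which is exactly the cancellation you were concerned about.
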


\begin{proof}

\begin{enumerate}[fullwidth]
\item This is a direct corollary of \autoref{thm: Fourier-sato-tamarkin},  \autoref{Theorem: Efimov no support} and \autoref{prop: 1.2 from 1.1}.
    
\item By \autoref{prop: kunneth}, when $X$ is conic, we have
\[\sT_{{V\times X}}(T^*V;\cC) \simeq \Sh_{V\times X}(V;\cC)\otimes{\sT_\Sp}.\]

By \autoref{lemma: trivial lemma} and (1), we have for any localizing invariants $F$ that $F(\sT_\Sp)\simeq \Omega F(\Sp)$. Taking $F=\Uloc( \Sh_{V\times X}(V;\cC) \otimes -)$, we obtain
\begin{align*}
    \Uloc(\Sh_{V\times X}(V;\cC))\simeq &\Sigma\Uloc(\Sh_{V\times X}(V;\cC)\otimes \sT_\Sp) \\
    \simeq &\Sigma\Uloc(\sT_{V\times X}(T^*V;\cC)) \\
    \simeq &\Sigma\Omega \Gamma_c(X; \Uloc({  \cC}) )
    \\
    \simeq &\Gamma_c(X; \Uloc({  \cC}) ).
\end{align*}

Then we conclude the proof.\qedhere
\end{enumerate}
\end{proof}

Let $X \subset V^\vee$ be closed, and set $U = V^\vee \setminus X$, we can consider the quotient category 
\[\sT(V\times U;\cC)\coloneqq \sT(T^*V;\Sp)/\sT_{{V\times X}}(T^*V;\Sp);\]
and, if $X$ is conic and then $U$ is also conic, we can consider
\[\Sh(V,V\times U;\cC)\coloneqq\Sh(V;\cC)/\Sh_{{V\times X}}(V;\cC)\]
as in \cite[Definition 6.1.1]{KS90}.

We then obtain the following consequence:
\begin{Coro}Let $V$ be a finite dimensional real vector space and a closed set $X\subset V^\vee$ with $U=V^\vee\setminus X$.
\begin{enumerate}[fullwidth]
        \item We have the equivalence:\[\Uloc(\sT(V\times U;\cC)) \simeq \Omega\Gamma_c(U; \Uloc({\cC}) );\]
        \item If $X$ is in addition conic, then we have
        \[\Uloc (\Sh(V,V\times U;\cC)) \simeq \Gamma_c(U; \Uloc({\cC}) ).\]
\end{enumerate}  
In particular, the same statements hold for any finitary localizing invariant in place of $\Uloc$.
\end{Coro}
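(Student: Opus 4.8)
The plan is to deduce the Corollary from \autoref{Theorem: main thm} together with the localization (bi-fiber) sequences that define the two quotient categories. The key point is that both quotients fit into bi-fiber sequences in $\Catdual$, and since $\Uloc$ sends bi-fiber sequences to fiber sequences, we get an explicit formula for $\Uloc$ of each quotient in terms of $\Uloc$ of the ambient category and of the subcategory — both of which we already know.

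For part (1), first I would record the bi-fiber sequence
\[
\sT_{V\times X}(T^*V;\cC) \rightarrow \sT(T^*V;\cC) \rightarrow \sT(V\times U;\cC)
\]
in $\Catdual$, which holds because $\sT_{V\times X}(T^*V;\cC)$ is a reflexive subcategory (its inclusion admits both adjoints, as established for microsupport subcategories and inherited by the Tamarkin quotient). Applying $\Uloc$ yields a fiber sequence
\[
\Uloc(\sT_{V\times X}(T^*V;\cC)) \rightarrow \Uloc(\sT(T^*V;\cC)) \rightarrow \Uloc(\sT(V\times U;\cC)).
\]
Now $\Uloc(\sT(T^*V;\cC)) \simeq \Uloc(\Sh(V;\sT_\cC)) \simeq \Gamma_c(V;\Uloc(\sT_\cC))$ by \autoref{Theorem: Efimov no support}, and since $V$ is contractible this is just $\Uloc(\sT_\cC) \simeq \Omega\Uloc(\cC)$. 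Meanwhile \autoref{Theorem: main thm}(1) gives $\Uloc(\sT_{V\times X}(T^*V;\cC)) \simeq \Omega\Gamma_c(X;\Uloc(\cC))$. Plugging these into the fiber sequence and using that $\Gamma_c(-;\Uloc(\cC))$ turns the closed-open decomposition $X \hookrightarrow V^\vee \hookleftarrow U$ into a fiber sequence $\Gamma_c(U;\Uloc(\cC)) \to \Gamma_c(V^\vee;\Uloc(\cC)) \to \Gamma_c(X;\Uloc(\cC))$ — again with $\Gamma_c(V^\vee;\Uloc(\cC)) \simeq \Uloc(\cC)$ since $V^\vee$ is contractible (note: $\Gamma_c$ of a contractible space of positive dimension is a shift; one must be careful and use that the relevant $\Gamma_c$ here is really $\Gamma_c$ of the corresponding Tamarkin coefficient, matching the shifts) — I identify $\Uloc(\sT(V\times U;\cC)) \simeq \Omega\Gamma_c(U;\Uloc(\cC))$ after a diagram chase comparing the two fiber sequences. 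For part (2), when $X$ is conic the same argument applies verbatim to the bi-fiber sequence $\Sh_{V\times X}(V;\cC) \to \Sh(V;\cC) \to \Sh(V,V\times U;\cC)$, now using \autoref{Theorem: main thm}(2) and $\Uloc(\Sh(V;\cC)) \simeq \Gamma_c(V;\Uloc(\cC)) \simeq \Uloc(\cC)$, together with the same $\Gamma_c$-localization sequence.

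The main obstacle I anticipate is bookkeeping the suspension/loop shifts and verifying that the connecting maps in the $\Uloc$-fiber sequence agree with those in the topological $\Gamma_c$-localization sequence, rather than differing by a sign or a shift. Concretely, one wants a commuting square relating the fiber sequence obtained by applying $\Uloc$ to the categorical localization sequence to the fiber sequence obtained by applying $\Gamma_c(-;\Uloc(\cC))$ to the decomposition of $V^\vee$; this compatibility is exactly what \autoref{Theorem: Efimov no support} provides at the level of the functor $\underline{\mathcal C} \mapsto \Gamma_c(X,(\Uloc\underline{\mathcal C})^\sharp)$, which is natural in closed/open inclusions. Once that naturality is invoked, the identification of the third term is forced, and the shift discrepancy in part (1) (the extra $\Omega$) is seen to come uniformly from the $\sT$-coefficient, exactly as in \autoref{prop: 1.2 from 1.1}. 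A minor additional point to check is that $\sT(V\times U;\cC) \simeq \sT(V\times U;\Sp)\otimes\cC$ and likewise for $\Sh(V,V\times U;\cC)$, so that \autoref{lemma: trivial lemma} again reduces everything to $\cC = \Sp$; this is immediate from \autoref{defintion: ad-hoc def of SS} and the fact that $-\otimes\cC$ preserves bi-fiber sequences.
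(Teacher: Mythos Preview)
Your approach is essentially the paper's: both apply $\Uloc$ to the defining bi-fiber sequence for the quotient, identify the subcategory and ambient terms via \autoref{Theorem: main thm}, and then match the resulting fiber sequence with the open--closed $\Gamma_c$-sequence on $V^\vee$. One simplification over what you wrote: instead of computing $\Uloc(\sT(T^*V;\cC))$ via \autoref{Theorem: Efimov no support} on $V$ (which is what forces the shift bookkeeping you flag), just apply \autoref{Theorem: main thm}(1) with $X=V^\vee$ to get $\Omega\Gamma_c(V^\vee;\Uloc(\cC))$ directly; then all three terms live uniformly over $V^\vee$, and the naturality in $X$ of the Fourier--Sato identification $\sT_{V\times X}(T^*V;\Sp)\simeq\Sh(X;\sT_\Sp)$ makes the comparison of fiber sequences immediate.
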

\begin{proof}We prove (2); the argument for (1) is analogous.

By \autoref{lemma: Omega-lens2}, we know the quotient functor $\Sh(V;\cC)\rightarrow \Sh_{{V\times X}}(V;\cC)$ is strongly continuous. This yields a fiber sequence in $\Catdual$
\[\Sh(V,V\times U;\cC) \rightarrow \Sh(V;\cC) \rightarrow \Sh_{{V\times X}}(V;\cC).\]
Applying $\Uloc$ to the sequence, then the result follows from the fiber sequence
\[\Gamma_c(U; \Uloc({\cC}) )\rightarrow\Gamma_c(V^\vee; \Uloc({\cC}) )\rightarrow\Gamma_c(X; \Uloc({\cC}) ).\qedhere\]
\end{proof}

\subsection{Further questions}\label{subsection: questions}

At the end of this note, we mention that for a general cotangent bundle $T^*M$ and an open set $D\subset T^*M$, it is known that the Tamarkin category
\[\sT(D;\cC) = \sT(T^*M;\cC)/\sT_{T^*M\setminus D}(T^*M;\cC)\]
has important symplectic geometric information of the open symplectic manifold $D$, and the filtered Fukaya category $\mathcal{F}^{fil}(D)$ is a full subcategory of $\sT(D;\cC)$. 

For example, the Hochschild homology of $\sT(D;\cC)$ was studied in \cite{CyclicZHANG,Hochschild-Kuo-Shende-Zhang}. It is proven that corresponding $\sT_{\Mod_\bK}$-linear Hochschild (co)homology is equivalent to the filtered symplectic cohomology of $U$ (when $D$ has good contact boundary). 

One may be naturally interested in the computation of 
\[\Uloc(\sT(D;\cC))\]
or any specific finitary localizing invariants instead of $\Uloc$. Here, our result is the first attempt for the question for $D=V\times U$ for a real vector space $V$ and an open set $U\subset V^\vee$.

\bibliographystyle{bingyu}
\clearpage
\phantomsection
\bibliography{bibtex}

\end{document}